\documentclass[11pt,reqno]{amsart}

\numberwithin{equation}{section}


\usepackage{amssymb}
\usepackage{amsmath}
\usepackage{amsbsy}
\usepackage{amscd}
\usepackage{amsthm}
\usepackage{amsfonts}
\usepackage{enumerate}

\usepackage{color}

\usepackage{ifpdf}
\ifpdf \usepackage[pdftex,pdfstartview=FitH,pdfpagemode=none,colorlinks,bookmarks,linkcolor=blue]{hyperref} \else  \usepackage[hypertex]{hyperref} \fi

\usepackage[nobysame,abbrev,alphabetic]{amsrefs}

\newcommand{\hide}[1]{}


\newtheorem{theorem}{Theorem}[section]

\newtheorem{lemma}[theorem]{Lemma}
\newtheorem{corollary}[theorem]{Corollary}

\newtheorem{conjecture}[theorem]{Conjecture}

\newtheorem{proposition}[theorem]{Proposition}

\newtheorem{remark}[theorem]{Remark}
\theoremstyle{definition}


\newcommand{\bC}{\mathbb{C}}

\newcommand{\bR}{\mathbb{R}}
\newcommand{\bZ}{\mathbb{Z}}

\newcommand{\bN}{\mathbb{N}}

\newcommand{\bT}{\mathbb{T}}

\newcommand{\talpha}{{\tilde{\alpha}}}

\newcommand{\bfone}{\mathbf{1}}

\newcommand{\di}{\mathrm{d}}

\newcommand{\Exp}{\mathop{\mathbb{E}}}



\newcommand{\onto}{\xymatrix{\ar@{>>}[r]&}}
\newcommand{\da}[4]{\xymatrix{#1 \ar@<.5ex>[r]^{#2} \ar@<-.5ex>[r]_{#3} & #4}}

\newcounter{subconst}[subsection]

\newcounter{const}

\newcounter{CONST}

\begin{document}

\title[M\"{o}bius function and discrete spectrum]{M\"{o}bius disjointness for topological models of ergodic systems with discrete spectrum}

\author[W. Huang]{Wen Huang}
\address{Department of Mathematics, Sichuan University,
Chengdu, Sichuan 610064, China}
\address{School of Mathematical Sciences, University of Science and Technology of
China, Hefei, Anhui 230026, China}
\email{wenh@mail.ustc.edu.cn}

\author[Z. Wang]{Zhiren Wang}
\address{Department of Mathematics, Pennsylvania State University,
University Park, PA 16802, USA}
\email{zhirenw@psu.edu}

\author[G. Zhang]{Guohua Zhang}
\address{School of Mathematical Sciences and Laboratory of Mathematics for Nonlinear Science, Fudan University,
Shanghai 200433, China}
\email{chiaths.zhang@gmail.com}

\setcounter{page}{1}

\begin{abstract}
We provide a criterion for a point satisfying the required disjointness condition in Sarnak's M\"obius Disjointness Conjecture. As a direct application, we have that the conjecture holds for any topological model of an ergodic system with discrete spectrum.
\end{abstract}

\maketitle

{\small\tableofcontents}

\section{Introduction}

The \emph{M\"obius function} $\mu:\bN\to\{-1,0,1\}$ is defined as follows: $\mu(n)=(-1)^k$ when $n$ is the product of $k$ distinct primes and $\mu(n)=0$ otherwise. The well-known \emph{M\"obius Randomness Law} in \cite[Section 13.1]{IK04} speculates that summing the M\"obius sequence $\mu(n)$ against any reasonable sequence $\xi(n)$ would lead to significant cancellations. This was verified in \cite{GT12} for polynomial nilsequences, a class of sequences of low complexity.

In \cite{S09} Sarnak reformulated  the law as the M\"obius Disjointness Conjecture by making precise the notion of a ``reasonable sequence'', namely a bounded sequence arising from a topological dynamical system with zero topological entropy. Since then, the M\"obius Disjointness Conjecture became an important theme in dynamical system and number theory.

Recall that a \emph{topological dynamical system} (\emph{TDS} for short) is a pair $(X, T)$ consisting of a compact metric space $X$, and a continuous self-map $T: X\rightarrow X$. The distance on $X$ will be denoted by $d(\cdot,\cdot)$.
In this paper, $\Exp$ stands for a finite average, for instance, $\displaystyle\Exp_{n=N_1}^{N_2-1} A_n= \frac1{N_2-N_1}\sum_{n=N_1}^{N_2-1}A_n.$

\begin{conjecture} \label{ConjSarnak} (M\"obius Disjointness Conjecture, \cite{S09})
 Let $(X, T)$ be a TDS with zero topological entropy. Then, for every $x\in X$,
\begin{equation}\label{EqConj}\lim_{N\to\infty}\Exp_{n=1}^{N}f(T^nx)\mu(n)=0, \forall f\in C(X).\end{equation}
\end{conjecture}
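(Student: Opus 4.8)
The plan is to fix $f\in C(X)$ and a point $x\in X$, abbreviate $a(n)=f(T^nx)$ (a bounded sequence), and reduce the multiplicative averaging problem \eqref{EqConj} to a purely dynamical decorrelation statement by means of the orthogonality criterion of K\'atai and of Bourgain--Sarnak--Ziegler. That criterion reduces the vanishing of $\Exp_{n=1}^{N}a(n)\mu(n)$ to showing that the self-correlation averages
\[ \Exp_{n=1}^{N} a(pn)\overline{a(qn)} \]
are small for distinct primes $p\neq q$. Thus it suffices to understand the orbit of $x$ sampled along the arithmetic progressions $pn$ and $qn$; in dynamical language this is a question about the self-joinings of $(X,T)$ realized along the orbit closure of $x$.

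Next I would bring in zero topological entropy. By the variational principle it forces every $T$-invariant Borel probability measure $\nu$ on $X$ to have zero measure-theoretic entropy, hence trivial Pinsker factor and no positive-entropy factors. The strategy is to exploit this to control $\Exp_{n=1}^{N} a(pn)\overline{a(qn)}$: after extracting a measure for which $x$ is generic along the relevant scale, one analyzes the induced self-joining and seeks cancellation from the absence of arithmetic resonances between the multipliers $p$ and $q$. In the structured regime --- discrete-spectrum systems and their isometric extensions --- this works transparently. There $a(n)=f(T^nx)$ is uniformly approximated by almost periodic sequences $\sum_j b_j\lambda_j^{\,n}$, where the $\lambda_j$ are the (continuous) eigenvalues and $f_j\circ T=\lambda_j f_j$; for $p\neq q$ one then has $\Exp_{n=1}^{N}\lambda_j^{\,pn}\overline{\lambda_k^{\,qn}}\to 0$ unless $\lambda_j^{\,p}=\lambda_k^{\,q}$, a coincidence the spectrum permits only on a negligible set, which yields the required decorrelation and hence \eqref{EqConj}.

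The hard part, and the reason the conjecture is open in the stated generality, is that zero topological entropy is far weaker than the structure just exploited: it forbids positive-entropy factors but permits weakly mixing, indeed mixing, behavior (horocycle flows being the paradigm), so no structure theorem is available to verify the orthogonality hypothesis at an arbitrary point of an arbitrary zero-entropy system. Moreover $x$ is a purely \emph{topological} datum and need not be generic for any tractable measure, so even the passage from the orbit of $x$ to the measure-theoretic structure theory is itself delicate. The productive route is therefore to isolate a \emph{pointwise} criterion on $x$ that already guarantees the Bourgain--Sarnak--Ziegler hypothesis, and then to verify that criterion for the systems one can control --- here the topological models of ergodic discrete-spectrum systems --- with the verification for general zero-entropy systems remaining the decisive outstanding difficulty.
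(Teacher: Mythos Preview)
First a framing point: the statement is a conjecture, and the paper does not prove it in general --- it establishes only the discrete-spectrum case (Theorems~\ref{discrete-spectrum}--\ref{ThmGeneral}). Your closing paragraph acknowledges this, so the substantive question is whether your BSZ-based sketch actually handles that discrete-spectrum case.

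It has a concrete gap. The K\'atai/Bourgain--Sarnak--Ziegler criterion requires $\Exp_{n=1}^{N}\lambda_j^{\,pn}\overline{\lambda_k^{\,qn}}\to 0$ for all pairs of distinct sufficiently large primes $p\neq q$, and you dismiss the obstruction $\lambda_j^{\,p}=\lambda_k^{\,q}$ as ``a coincidence the spectrum permits only on a negligible set''. This is false for rational spectrum: if $\lambda$ is a root of unity of order $k$, then $\lambda^{p}=\lambda^{q}$ whenever $p\equiv q\pmod k$, which by Dirichlet occurs for infinitely many prime pairs of arbitrary size, so the BSZ hypothesis fails outright already for the sequence $a(n)=\lambda^n$. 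This is not a removable technicality: it is precisely why prior work covered only \emph{irrational} (quasi-)discrete spectrum, and why the rational case --- even topological models of the two-point system with spectrum $\{1,-1\}$ --- was singled out as open by Downarowicz--Glasner, as the introduction here recounts.

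The paper therefore abandons BSZ and uses instead the Matom\"aki--Radziwi\l\l--Tao short-interval estimate (Proposition~\ref{PropMRT}), which bounds $\Exp_{l=0}^{L-1}\mu(n+l)e(\beta l)$ for most $n\leq N$ \emph{uniformly in $\beta$}; this uniformity is exactly what handles roots of unity. Dynamically, the argument uses Halmos--von Neumann together with Lusin's theorem to approximate $f$, on a compact set of nearly full measure, by a trigonometric polynomial pulled back from a torus rotation (Proposition~\ref{PropProjApprox}), then chops the orbit of $x$ into length-$L$ blocks and applies MRT blockwise. A secondary point you skate over is that in a mere topological model the eigenfunctions are only measurable, not continuous, so the pointwise approximation $f(T^nx)\approx\sum_j b_j\lambda_j^{\,n}$ at a \emph{specific} point $x$ is not automatic; the paper's Lusin/Tietze construction is precisely where this is made rigorous.
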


The case where $(X, T)$ is a finite periodic system is equivalent to the Prime Number Theorem in
arithmetic progressions, and the case where $T$ is a rotation on the circle is
Davenport's theorem \cite{D37}.
Many other special cases have been established for Conjecture \ref{ConjSarnak} more recently. Here we just list a few of them: \cite{MR10, G12, GT12, S12, B13b, B13a, BSZ13, ELD14,
MMR14, LS15,  KL15, MR15, FJ15, FKLM15, HLSY15, P15, W15, EKSL16, EKLR15, ELD15, Veech16, W16}.

Note that the conjecture holds for any minimal rotation over a compact abelian metric group, and hence for its every isomorphic extension (for details see \cite[Theorem 4.1]{DK15}, \cite[Proposition 5.2]{EKSL16} or \cite{Veech16}).
Thus it is natural to ask \emph{whether the conjecture holds for any topological model of an ergodic system with discrete spectrum}, as the minimal rotation over a compact abelian metric group is
the ``standard model" of an ergodic system with discrete spectrum by the well-known Halmos-von Neumann Representation Theorem (see for example \cite{HVN42} or \cite[Theorem 3.6]{W82}).

By a \emph{topological model} of an ergodic system $(\mathcal{X}, \mathcal{B}, \xi, S)$ we mean any uniquely ergodic TDS $(X, T)$ (with the unique invariant Borel probability measure $\nu$) such that $(X, \mathcal{B}_X, \nu, T)$ and $(\mathcal{X}, \mathcal{B}, \xi, S)$ are measure-theoretically isomorphic, where $\mathcal{B}_X$  is the Borel $\sigma$-algebra of $X$.
Recall that a TDS is \emph{uniquely ergodic} if it admits a uniquely invariant Borel probability measure.
We say that TDS $(X, T)$ is \emph{minimal} if $X$ is the only nonempty closed \emph{invariant} subset $K$ (that is, $T K\subset K$) of the system.

Though an ergodic system with discrete spectrum may be the simplest ergodic system with zero measure-theoretic entropy, the question seems to be not easy, even we require additionally that these topological models are minimal. The trivial example of an ergodic system with discrete spectrum is a finite periodic system, whose each minimal topological model is exactly a finite periodic system, and so
the conjecture follows from the Prime Number Theorem in arithmetic progressions. In general, an ergodic system with discrete spectrum need not to be a finite periodic system, whose minimal topological model may present very complicated dynamical behavior, except its standard model of a minimal rotation over a compact abelian metric group. In fact, Lehrer \cite{L87} showed that each non-periodic ergodic system admits a minimal topological model which is topologically strongly mixing.

In this paper we will solve the question by proving the following result (without the assumption of the topological model being a minimal TDS).

\begin{theorem} \label{discrete-spectrum}
Conjecture \ref{ConjSarnak} holds for all topological models of an ergodic system with discrete spectrum.
\end{theorem}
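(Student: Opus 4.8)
The plan is to reduce the theorem to the point criterion of this paper and then check that \emph{every} point of a uniquely ergodic topological model of a discrete spectrum system satisfies it. Fix such a model $(X,T)$ with unique invariant measure $\nu$, a point $x\in X$, and $f\in C(X)$. Splitting $f=(f-\int f\,d\nu)+\int f\,d\nu$, the constant part contributes $(\int f\,d\nu)\,\Exp_{n=1}^{N}\mu(n)\to0$ by the Prime Number Theorem, so I may and do assume $\int f\,d\nu=0$.

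The first input is unique ergodicity: for every $g\in C(X)$ one has $\Exp_{n=1}^{N}g(T^nx)\to\int g\,d\nu$, in fact uniformly in $x$. Applying this to $g=(f\circ T^h)\cdot\overline f$ shows that the empirical correlations of the bounded sequence $b_n:=f(T^nx)$ converge, for \emph{every} $x\in X$, to the Fourier coefficients of the spectral measure $\sigma_f$ of $f$ in $L^2(X,\nu)$:
\[
\lim_{N\to\infty}\Exp_{n=1}^{N}f(T^{n+h}x)\,\overline{f(T^nx)}=\int (f\circ T^h)\cdot\overline f\,d\nu=\widehat{\sigma_f}(h)\qquad(h\ge0).
\]
The second input is that $(X,\mathcal{B}_X,\nu,T)$ has discrete spectrum, so $L^2(X,\nu)$ is spanned by eigenfunctions; hence $\sigma_f$ is purely atomic, carried by the countable group of eigenvalues of the system, with $\sigma_f(\{1\})=\bigl|\int f\,d\nu\bigr|^2=0$. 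The target is then to show that $(b_n)=(f(T^nx))$ is Besicovitch almost periodic: for each $\varepsilon>0$ there is a trigonometric polynomial $P(n)=\sum_{j=1}^{J}a_je(n\theta_j)$ with $\limsup_N\Exp_{n=1}^{N}|f(T^nx)-P(n)|^2\le\varepsilon$. Granting this, the conclusion is immediate:
\[
\Bigl|\Exp_{n=1}^{N}f(T^nx)\mu(n)\Bigr|\le\Bigl|\sum_{j=1}^{J}a_j\,\Exp_{n=1}^{N}e(n\theta_j)\mu(n)\Bigr|+\Bigl(\Exp_{n=1}^{N}|f(T^nx)-P(n)|^2\Bigr)^{1/2},
\]
where the first term tends to $0$ as $N\to\infty$ by Davenport's theorem and the second is eventually at most $\varepsilon^{1/2}$; letting $\varepsilon\to0$ yields \eqref{EqConj}.

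The hard part will be the passage from ``$\sigma_f$ is atomic'' to ``$(f(T^nx))$ is Besicovitch almost periodic,'' \emph{for every} $x$, not merely for $\nu$-a.e.\ $x$. The difficulty is genuine: atomic empirical correlations do not by themselves force Besicovitch almost periodicity (the sequence $n\mapsto e^{\imag\sqrt n}$ has the empirical correlations of a constant yet is not almost periodic), and the measure-theoretic eigenfunctions are in general not continuous — by Lehrer's theorem the model may even be topologically strongly mixing, so it has no non-constant continuous eigenfunction and eigenfunctions cannot be evaluated along the orbit of an arbitrary point. What I would exploit is the full strength of unique ergodicity, which supplies not only the correlations of $f$ but all cross-correlations of the family $\{g(T^nx)\}_{g\in C(X)}$, all equal to their $L^2(\nu)$-values; equivalently, since a measurable eigenfunction $\phi_\lambda$ has $|\phi_\lambda|=1$ $\nu$-a.e.\ and hence on $\supp\nu$, one can control, uniformly along \emph{every} orbit, the mass that $(f(T^nx))$ carries at each eigenvalue and show that Parseval holds with no defect, which forces the density-$L^2$ approximation by finite exponential sums. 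One first reduces to $\supp\nu=X$; a point $x\notin\supp\nu$ has its forward orbit eventually close to the unique minimal set $\supp\nu$, and the corresponding estimate follows by a routine approximation using the continuity of $f$. Converting ``atomic empirical spectrum for every continuous observable'' into ``every orbit is Besicovitch almost periodic'' is precisely the content that the criterion of this paper is designed to package, and I expect that to be where essentially all the work lies.
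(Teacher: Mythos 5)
Your reduction (split off the mean, use Davenport for each exponential, Cauchy--Schwarz for the error) would indeed work \emph{if} the orbit sequence $(f(T^nx))_n$ were Besicovitch almost periodic for every $x$, but that is exactly the step you leave unproved (``I expect that to be where essentially all the work lies''), and it is not a packaging issue: as stated it is \emph{false}, so the whole scheme cannot be completed. Concretely, let $x\in\{0,1\}^{\bN}$ be the sequence that alternates $0101\dots$ except that the parity of the alternation flips at the sparse positions $2^k$, and let $X$ be its orbit closure under the shift $T$. Any window of length $N$ contains $O(\log N)$ defect positions, so every invariant measure of $(X,T)$ is the $2$-periodic measure; thus $(X,T)$ is a uniquely ergodic topological model of the two-point ergodic rotation, an ergodic system with (rational) discrete spectrum, squarely within the scope of Theorem \ref{discrete-spectrum}. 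Take $f(y)=(-1)^{y_0}$. Then $f(T^nx)(-1)^n$ is constant between consecutive defects and changes sign at each defect, so $\Exp_{n=1}^{N}f(T^nx)(-1)^n$ oscillates (roughly between $\pm\tfrac13$ along $N=2^K$) and has no limit. But a Besicovitch almost periodic sequence has \emph{all} its Fourier--Bohr coefficients $\lim_N\Exp_{n\le N}b_ne(-n\theta)$ existing (compare against the approximating trigonometric polynomial and use Cauchy--Schwarz), so $(f(T^nx))$ is not Besicovitch almost periodic, even though \eqref{EqConj} does hold at this $x$ by the paper's theorem. The mechanism you hoped to use also cannot be repaired: the ``mass at each eigenvalue along the orbit of $x$'' is a twisted Birkhoff average, i.e.\ a Birkhoff average of a continuous function on $X\times\bT^1$ under $T\times R_\theta$, and precisely when $e(\theta)$ is an eigenvalue this product system is \emph{not} uniquely ergodic (the graphs of the circle of measurable eigenfunctions carry distinct invariant measures), so unique ergodicity of $(X,T)$ gives no control of these limits at an arbitrary point --- this is exactly what the example exhibits. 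Likewise, ``$|\phi_\lambda|=1$ $\nu$-a.e.\ and hence on $\supp\nu$'' has no content for a merely measurable eigenfunction, and by Lehrer's theorem the model may be topologically mixing, so no continuous substitute exists.

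This is also why the paper's actual proof is structured the way it is: it never attempts a single global frequency decomposition along the whole orbit. Instead (Proposition \ref{PropProjApprox} and after) it uses Halmos--von Neumann plus Lusin to get continuity of the measurable conjugacy only on compact sets $A_j$ of large measure, approximates the observable there by finitely many characters of a finite-dimensional torus, cuts $\{1,\dots,N_i\}$ into blocks of length $L$, and on each block approximates $h(T^lx_n)$ by exponentials $e(\theta_{j_n,m}+l\beta_{j_n,m})$ whose phases \emph{and frequencies depend on the block}; the Möbius cancellation then comes from the Matomäki--Radziwi\l{}\l{}--Tao short-interval estimate (Proposition \ref{PropMRT}), which is uniform in the frequency and requires no convergence of long twisted averages, in place of Davenport's theorem. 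If you want to salvage your outline, you would have to replace ``Besicovitch almost periodic along all of $\bN$'' by a block-wise statement of exactly this kind, at which point you are reproducing the paper's argument rather than bypassing it.
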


Remark that the conjecture was solved for any topological model of an ergodic system with \emph{irrational} discrete spectrum by
 the recent interesting work of El Abdalaoui, Lema\'ncyzk and de la Rue \cite{ELD15}\footnote
{In fact in \cite{ELD15} Conjecture \ref{ConjSarnak} is solved for any topological model of an ergodic system with \emph{quasi-discrete spectrum}. A transformation with quasi-discrete spectrum is by definition totally ergodic \cite{ELD15}*{Definition 2}, which holds if and only if all of its eigenvalues except 1 are irrational. In particular, any ergodic automorphism with irrational discrete spectrum has quasi-discrete spectrum.}. In \cite{DG15}, Downarowicz and Glasner asked if the conjecture holds for all topological models of an ergodic system with \emph{rational} discrete spectrum $\{- 1, 1\}$ (whose standard model is the two-point periodic system). Thus Theorem \ref{discrete-spectrum} answers affirmatively the question raised in \cite{DG15}. After finishing a preprint version of the paper, we learned from Mariusz Lema\'nczyk that El Abdalauoi, Ku\l{}aga-Przymus, Lema\'nczyk and de la Rue had previously solved the conjecture for all topological models of a finite periodic system and communicated it to others on several occasions.

Theorem \ref{discrete-spectrum} follows directly from the following more general result.

\begin{theorem}\label{ThmMain}
Suppose that TDS $(X,T)$ admits only countably many ergodic invariant Borel probability measures, and that each of these measures has discrete spectrum. Then Conjecture \ref{ConjSarnak} holds for the system $(X, T)$.
\end{theorem}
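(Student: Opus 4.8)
The plan is to reduce to a point that is generic, along a subsequence, for a single invariant measure $\rho$, to observe that the hypothesis forces $\rho$ to have purely atomic spectral measures, and then to show that the orbit of such a point is approximated in the Besicovitch sense by a finite generalized trigonometric polynomial $n\mapsto\sum_\lambda b_\lambda\lambda^n$, after which Davenport's bound finishes the argument. Fix $x\in X$ and $f\in C(X)$ with $\|f\|_\infty\le1$. It suffices to prove that every increasing sequence $(N_j)$ admits a subsequence along which $\Exp_{n=1}^{N_j}f(T^nx)\mu(n)\to0$. Passing to a subsequence we may assume $\Exp_{n=1}^{N_j}\delta_{T^nx}\to\rho$ weakly-$*$; then $\rho$ is $T$-invariant and, by the standing hypothesis together with the ergodic decomposition, $\rho=\sum_i c_i\mu_i$ is a countable convex combination of ergodic measures, each with discrete spectrum. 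Consequently the spectral measure $\sigma_{f,\rho}$ of $f$ for $(X,\rho,T)$ — the positive measure on $\bT$ with $\widehat{\sigma_{f,\rho}}(h)=\int(f\circ T^h)\,\bar f\,d\rho$ — equals $\sum_i c_i\,\sigma_{f,\mu_i}$, a countable sum of atomic measures, and is therefore purely atomic, carried by a countable set $\Lambda\subset\bT$ of eigenvalues of $(X,\rho,T)$. It is precisely here that countability of the set of ergodic measures is used: a continuous average of atomic spectral measures can acquire a nontrivial continuous part, and the scheme below would collapse.

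Now, by a further diagonal extraction assume that $b_\lambda:=\lim_j\Exp_{n=1}^{N_j}\bar\lambda^{\,n}f(T^nx)$ exists for every $\lambda\in\Lambda$, and for a finite $J\subset\Lambda$ put $G_n:=\sum_{\lambda\in J}b_\lambda\lambda^n$. Since $x$ is generic for $\rho$ along $(N_j)$ we have $\Exp_{n=1}^{N_j}f(T^{n+h}x)\overline{f(T^nx)}\to\widehat{\sigma_{f,\rho}}(h)$ for each $h$, and $\Exp_{n=1}^{N_j}(\lambda\bar\lambda')^n\to\mathbf{1}_{\lambda=\lambda'}$ for $\lambda,\lambda'\in\Lambda$; expanding the square yields $\Exp_{n=1}^{N_j}|f(T^nx)-G_n|^2\to\int|f|^2\,d\rho-\sum_{\lambda\in J}|b_\lambda|^2$. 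The decisive point is the identity $|b_\lambda|^2=\sigma_{f,\rho}(\{\lambda\})$ for every $\lambda\in\Lambda$; the bound $|b_\lambda|^2\le\sigma_{f,\rho}(\{\lambda\})$ follows at once from van der Corput's inequality together with Fej\'er summation, and the reverse inequality is the crux (see below). Granting it, Parseval gives $\int|f|^2\,d\rho=\sum_{\lambda\in\Lambda}\sigma_{f,\rho}(\{\lambda\})$, so for any $\varepsilon>0$ one may choose a finite $J$ with $\sum_{\lambda\in J}\sigma_{f,\rho}(\{\lambda\})>\int|f|^2\,d\rho-\varepsilon^2$, whence $\limsup_j\Exp_{n=1}^{N_j}|f(T^nx)-G_n|\le\varepsilon$.

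To conclude, write $\Exp_{n=1}^{N_j}f(T^nx)\mu(n)=\Exp_{n=1}^{N_j}\big(f(T^nx)-G_n\big)\mu(n)+\sum_{\lambda\in J}b_\lambda\,\Exp_{n=1}^{N_j}\lambda^n\mu(n)$. The first term is bounded by $\Exp_{n=1}^{N_j}|f(T^nx)-G_n|$, hence eventually $\le2\varepsilon$, while $\Exp_{n=1}^{N_j}\lambda^n\mu(n)\to0$ for every $\lambda\in\bT$ by Davenport's theorem (the case $\lambda=1$ being the Prime Number Theorem). Letting $\varepsilon\to0$ gives $\Exp_{n=1}^{N_j}f(T^nx)\mu(n)\to0$, which proves Theorem~\ref{ThmMain}; Theorem~\ref{discrete-spectrum} is then the special case in which $(X,T)$ is uniquely ergodic, its unique invariant measure being the one with discrete spectrum.

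I expect the decisive difficulty to be the lower bound $|b_\lambda|^2\ge\sigma_{f,\rho}(\{\lambda\})$ — equivalently, that along the chosen times the orbit of $x$ carries no hidden cancellation in the direction of each eigenvalue, or again that a generic point of a discrete-spectrum system has a Besicovitch almost periodic orbit. The obstruction is that the eigenfunctions of the measure-theoretic Kronecker factor of $\rho$ need not be continuous — by Lehrer's theorem the model may even be topologically mixing — so one cannot simply evaluate them at $x$. The route I would take is to extract, from a weak-$*$ limit of the complex measures $\Exp_{n=1}^{N_j}\bar\lambda^{\,n}\delta_{T^nx}$, a function $\psi_\lambda$ with $\psi_\lambda\circ T=\bar\lambda\psi_\lambda$ and $|\psi_\lambda|\le1$, write $b_\lambda=\langle P_\lambda f,\bar\psi_\lambda\rangle_\rho$, and then force $\|\psi_\lambda\|_2=1$ and $\bar\psi_\lambda\parallel P_\lambda f$ by analysing the joining of $(X,\rho,T)$ with a carefully chosen compact abelian group rotation whose eigenvalue group contains $\Lambda$, generated along the orbit of $x$. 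Arranging this auxiliary rotation so that $f$ is genuinely recaptured — which rests on the pure point spectrum and on Furstenberg disjointness of rotations with disjoint eigenvalue groups — is the technical core, and is essentially the content of the point criterion announced in the abstract.
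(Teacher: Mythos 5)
Your reduction to a subsequential limit $\rho$, the ergodic decomposition of $\rho$ into countably many discrete-spectrum measures, and the observation that $\sigma_{f,\rho}$ is purely atomic are all fine, but the step you yourself single out as decisive --- the identity $|b_\lambda|^2=\sigma_{f,\rho}(\{\lambda\})$, i.e.\ that along $(N_j)$ the sequence $f(T^nx)$ is Besicovitch-approximated by a \emph{single} finite trigonometric polynomial capturing the full spectral mass --- is never proved (your last paragraph is a plan, not an argument), and it is in fact false under the hypotheses of Theorem \ref{ThmMain}. Let $X\subset\{0,1\}^{\bN}$ be the orbit closure under the shift $T$ of $x=0^{a_1}1^{a_2}0^{a_3}1^{a_4}\cdots$ with block lengths $a_k$ growing sufficiently fast, and let $f(y)=2y_0-1$. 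The only ergodic invariant measures of this system are the fixed-point masses $\delta_{0^\infty}$ and $\delta_{1^\infty}$, each trivially of discrete spectrum, so the hypotheses of Theorem \ref{ThmMain} (and of Theorem \ref{ThmGeneral}) hold. Choosing the $a_k$ and times $N_j$ so that up to time $N_j$ the orbit has spent about half its time deep inside $0$-blocks and half inside $1$-blocks, one gets $\Exp_{n=1}^{N_j}\delta_{T^nx}\to\rho=\tfrac12\delta_{0^\infty}+\tfrac12\delta_{1^\infty}$, and $\sigma_{f,\rho}=\delta_1$ has a single atom of mass $1$; yet every Fourier--Bohr coefficient of $f(T^nx)$ along $(N_j)$ vanishes: $b_1=\lim_j\Exp_{n=1}^{N_j}f(T^nx)=0$ by the half-and-half choice, and for $\lambda\neq1$ the geometric sums of $\bar\lambda^{\,n}$ over the long constant blocks are $O(|1-\lambda|^{-1})$ per block, hence negligible. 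Consequently $\liminf_j\Exp_{n=1}^{N_j}|f(T^nx)-G_n|$ is bounded below by a positive constant for \emph{every} finite trigonometric polynomial $G_n=\sum_\lambda b'_\lambda\lambda^n$, so your scheme of ``global Besicovitch approximation plus Davenport at finitely many fixed frequencies'' cannot close, and no joining argument can rescue the equality $|b_\lambda|^2=\sigma_{f,\rho}(\{\lambda\})$.

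The failure mode is precisely what the paper's proof of Theorem \ref{ThmGeneral} is built to handle: the phase (indeed the relevant ergodic component and frequency) with which the orbit tracks a group rotation may change from one stretch of time to the next, so no single exponential works over all of $[1,N_j]$. The paper therefore approximates the orbit only on short segments of length $L$, via the Lusin sets $A_j$ and the maps $p_j$ of Proposition \ref{PropProjApprox}, allowing the approximating exponentials $e(\theta_{j_n,m}+l\beta_{j_n,m})$ to depend on the starting position $n$, and then the needed cancellation is not Davenport's theorem but the Matom\"aki--Radziwi\l{}\l{}--Tao bound (Proposition \ref{PropMRT}) on $\Exp_{n}\bigl|\Exp_{l=0}^{L-1}\mu(l+n)e(\beta l)\bigr|$, uniform in $\beta$ and averaged over $n$. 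In the counterexample above this is exactly what saves the day (on each block the relevant frequency is $\lambda=1$ but the coefficient flips sign), and it is the ingredient your proposal is missing.
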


We shall prove Theorem \ref{ThmMain} by showing the following stronger result.

\begin{theorem}\label{ThmGeneral}
Let $(X,T)$ be a TDS with $x\in X$ and $\{N_i: i\in \mathbb{N}\}\subset \mathbb{N}$ tend to $\infty$, such that the sequence $\displaystyle\Exp_{n=1}^{N_i}\delta_{T^n (x)}$ converges to a Borel probability measure $\rho$ (which is obviously invariant). Suppose that $\rho$ is a convex combination of countably many ergodic invariant Borel probability measures, and that each of these ergodic measures has discrete spectrum. Then
\begin{equation}\label{EqThmGeneral}
\lim_{i\to\infty}\Exp_{n=1}^{N_i}f(T^nx)\mu(n)=0, \forall f\in C(X).
\end{equation}
\end{theorem}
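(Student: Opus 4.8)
The plan is to reduce \eqref{EqThmGeneral} to two classical cancellation facts — Davenport's estimate $\sup_{\alpha}\bigl|\Exp_{n=1}^{N}\mu(n)e(n\alpha)\bigr|=o(1)$ and the Prime Number Theorem in arithmetic progressions, $\Exp_{n=1}^{N}\mu(n)\mathbf{1}_{n\equiv r\,(b)}=o(1)$ for each $b,r$ — by showing that, along $\{N_i\}$, the statistics of the orbit of $x$ are governed by orbits of minimal group rotations, for which the conjecture is already known (as recalled in the introduction). Since \eqref{EqThmGeneral} asserts convergence to the fixed value $0$, it suffices to prove that every subsequence of $\{N_i\}$ has a further subsequence along which the average tends to $0$; hence I may pass to subsequences freely and assume that every auxiliary sequence of empirical measures introduced below converges weakly.

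First I would record the spectral structure. Writing $\rho=\sum_jc_j\rho_j$ as in the hypothesis, the measures $\rho_j$ are mutually singular and at most countably many, so there are pairwise disjoint Borel sets $X_j$ with $\rho_j(X_j)=1$ which, by inner and outer regularity (moving each boundary onto a $\rho$-null level set $\{d(\cdot,K)=t\}$), may be chosen to be $\rho$-continuity sets. Then $L^2(X,\rho)=\bigoplus_jL^2(X_j,\rho_j)$ is spanned by eigenfunctions of $U_T$, so $(X,\rho,T)$ has pure point spectrum — here the countability of the index set is essential — and by the Halmos--von Neumann theorem each $(X,\rho_j,T)$ is measurably a minimal rotation on a compact abelian metric group $(G_j,m_{G_j},R_{g_j})$. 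Since $\mathbf{1}_{X_j}$ and $\mathbf{1}_{X_j}g$ with $g\in C(X)$ are Riemann integrable for $\rho$, equidistribution yields $\Exp_{n=1}^{N_i}\mathbf{1}_{X_j}(T^nx)g(T^nx)\to c_j\int g\,d\rho_j$, so the visit times $\{n:T^nx\in X_j\}$ have density $c_j$ and the orbit ``equidistributes to $\rho_j$'' along them.

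Next I would form the products $X\times G_j$ with $T\times R_{g_j}$ and the orbit of $(x,1_{G_j})$; any weak-$*$ limit $\tilde\rho_j$ of $\Exp_{n=1}^{N_i}\delta_{(T^nx,\,g_j^{\,n})}$ is a $(T\times R_{g_j})$-joining of $\rho$ and $m_{G_j}$ which, by Stone--Weierstrass (products $\psi\otimes\chi$ of eigenfunctions of the two factors span $L^2(\tilde\rho_j)$), again has pure point spectrum. Ergodicity of $R_{g_j}$ then forces its ergodic components over $X_j\times G_j$ to be graphs of the isomorphism $X_j\cong G_j$ (up to a rotation), because $\psi_\chi\otimes\overline{\chi}$ is $(T\times R_{g_j})$-invariant for every eigenfunction $\psi_\chi$ of $\rho_j$. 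Unwinding this along the orbit, the contribution of $\mathbf{1}_{X_j}(T^nx)f(T^nx)$ to $\Exp_{n=1}^{N_i}f(T^nx)\mu(n)$ is, in the relevant density sense, an average of $f$ along a superposition of rotation orbits in $X_j$ weighted by powers $\chi(g_j)^n$ with $\chi\in\widehat{G_j}$; Davenport's estimate disposes of the frequencies $\chi(g_j)$ that are not roots of unity, and PNT in arithmetic progressions of those that are (they give periodic weights). Combined with a tail estimate from $\sum_jc_j=1$ and Cauchy--Schwarz on the truncation errors, this would give \eqref{EqThmGeneral}.

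The main obstacle — and where I expect the ``criterion'' of the abstract and essentially all the difficulty to reside — is making ``the orbit's statistics are governed by rotation orbits'' quantitative. The eigenfunctions of $\rho$ and the isomorphisms $X_j\cong G_j$ are in general only \emph{measurable}: by Lehrer's theorem the topological model can be topologically mixing, and then no nonconstant eigenfunction is continuous, so one cannot simply evaluate them along the $\rho$-null orbit of $x$ and must instead squeeze everything out of the continuous-function equidistribution; moreover, $\rho$ being typically non-ergodic, $(X,\rho,T)$ is not a factor of any single group rotation and the limit joinings $\tilde\rho_j$ are genuine superpositions of graphs. Most tellingly, the torsion part — eigenvalues that are roots of unity — is the genuinely new case: it does not suffice to invoke the bilinear criterion of K\'atai and of Bourgain--Sarnak--Ziegler (which would demand $\Exp_n f(T^{pn}x)\overline{f(T^{qn}x)}\to0$ for distinct primes $p,q$, and fails already for $f(T^nx)=(-1)^n$ with $p,q$ odd), so this part must be isolated on the maximal procyclic factor and handled through the Prime Number Theorem in arithmetic progressions via a careful equidistribution argument — precisely the ingredient that was absent in earlier work on \emph{irrational} discrete spectrum.
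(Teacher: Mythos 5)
There is a genuine gap, and it sits exactly at the point you flag as the main obstacle: your proposed number-theoretic inputs (Davenport's estimate and the Prime Number Theorem in arithmetic progressions) are not strong enough to close it. Davenport's bound $\sup_\alpha\bigl|\Exp_{n=1}^{N}\mu(n)e(n\alpha)\bigr|=o(1)$ requires a \emph{single} frequency and a \emph{globally coherent phase} over the whole range $1\le n\le N$. But for a general topological model the Halmos--von Neumann conjugacy $\phi_j$ is only measurable, and the full-measure sets on which it is defined and equivariant can be taken forward-invariant and mutually disjoint; since the orbit of $x$ is $\rho$-null, it may never enter any of them (and cannot eventually lie in one of them when several $\rho_j$ carry positive weight). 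Consequently one can only track the rotation on short orbit blocks: after a Lusin-type reduction one gets, for most starting times $n$, an approximation $f(T^{n+l}x)\approx\sum_m a_m e(\theta_{n,m}+l\beta_{n,m})$ valid for $0\le l\le L-1$ with $L\ll N_i$, where the phases $\theta_{n,m}$ (and even the frequencies, via the index $j_n$) change from block to block with no coherence across blocks. Your joining argument establishes structural facts about weak-$*$ limits of $\Exp_n\delta_{(T^nx,g_j^n)}$, i.e.\ about averages of \emph{continuous} test functions against the orbit, but it does not manufacture the global phase coherence that Davenport needs, and the Besicovitch-type approximation of $n\mapsto f(T^nx)$ by a fixed trigonometric polynomial that your plan implicitly requires is exactly what fails for models that are, say, topologically mixing.

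What the paper uses instead, and what your proposal is missing, is the Matom\"aki--Radziwi{\l}{\l}--Tao averaged short-interval theorem: for all $N\ge L\ge 10$,
\begin{equation*}
\sup_{\beta\in\bT^1}\ \Exp_{n=0}^{N-1}\left|\Exp_{l=0}^{L-1}\mu(l+n)e(\beta l)\right|
\ \le\ C_0\left((\log N)^{-\kappa_0}+\frac{\log\log L}{\log L}\right),
\end{equation*}
which gives cancellation of $\mu$ against an exponential \emph{within each block of length $L$}, on average over the starting point $n$ and uniformly in the frequency --- precisely matching the block-dependent phases produced by the Lusin reduction, and subsuming both the irrational frequencies (your Davenport case) and the root-of-unity frequencies (your PNT-in-progressions case) in one estimate. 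This is a far deeper input than Davenport's theorem and cannot be recovered by summing Davenport over blocks (each block would then only contribute the trivial bound $O(L)$). Your structural analysis --- mutual singularity of the $\rho_j$, Halmos--von Neumann, the failure of the K\'atai/Bourgain--Sarnak--Ziegler criterion for rational spectrum, and the identification of the measurability of the eigenfunctions as the crux --- is accurate and matches the paper's setup, but as written the argument does not prove the theorem: the step from ``the limit joinings are superpositions of graphs'' to ``the Möbius average cancels'' is exactly where a new quantitative ingredient is required, and the one you propose does not suffice.
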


Our criterion Theorem \ref{ThmGeneral} provides a sufficient condition for a point satisfying the disjointness condition \eqref{EqConj}, even though it is not hard to see that the point may produce positive topological entropy.
Observe that, as shown in \cite{DK15} and \cite{EKSL16}, the equation \eqref{EqConj} may fail for some point from a class of Toeplitz sequences which may have positive topological entropy.

As shown in next section, Theorem \ref{ThmGeneral}, the main technical result of the paper, is used to deduce Theorem \ref{ThmMain} and recover some recent results obtained by other mathematicians. We now briefly outline its proof as follows.

Given the point $x$, the sequence $\{N_i\}$ and the limit measure $\rho$, we first choose finitely many distinct ergodic measures $\rho_1,\cdots,\rho_J$ such that up to $\epsilon$-error, $\rho$ is a convex combination of $\rho_1,\cdots, \rho_J$. Thanks to the Halmos-von Neumann Representation Theorem, one can approximate, in a measurable way, the dynamics on $\rho_j$ by a rotation of a compact abelian metric group. And then by Lusin Theorem, one can fix a compact set $A_j$ on which this approximation is continuous. As the $\rho_j$'s are mutually singular, one can make the $A_j$'s disjoint.
Then for a typical $1\leq n\leq N_i$, $T^nx$ is very close to one of these $A_j$'s. By continuity of the map $T$, this allows us to approximate the finite orbit segment $T^nx,\cdots, T^{n+L-1}x$ by a finite orbit segment that stays inside $A_j$ for most of the time. After further approximating measurable functions on the rotation by continuous ones on a finite dimensional torus, we may approximate the values of the function $f$ observed along the finite segment of orbit above by a linear combination of exponential sequences of the form $\{e^{2\pi i\beta (l+n)}\}_{l=0}^{L-1}$. The value of $\beta$ may depend on the starting position $n$ of the segment. We finally apply a recent theorem of Matom\"aki, Radziwi\l{}\l{} and Tao on short averages of non-pretentious multiplicative functions to assert that, for sufficiently large $N_i$ and appropriately chosen $L$, such exponential sequences have significant cancellations against the M\"obius sequence $\{\mu(l+n)\}_{l=0}^{L-1}$ for most $1\leq n\leq N_i$. Finally, one combines the bounds on these short segments to obtain the conclusion.

\vskip 10pt

\noindent {\bf Acknowledgements.}
Part of the work was carried out during a visit of Z. Wang to the School of Mathematical Sciences and Shanghai Key Laboratory for Contemporary Applied Mathematics of Fudan University. He gratefully acknowledges the hospitality of Fudan University.

We thank Ai-Hua Fan, Yunping Jiang, Peter Sarnak, Weixiao Shen and Xiangdong Ye for helpful and encouraging discussions.

We also thank Lema\'nczyk for bringing to our attention the work of \cite{EKSL16} and \cite{Veech16}, and for informing us, upon the posting of a preprint version of this paper, of El Abdalauoi, Ku\l{}aga-Przymus, Lema\'nczyk and de la Rue's earlier unpublished proof for topological models of a finite periodic system.

W. Huang was supported by NSFC (11225105 and 11431012),
Z. Wang was supported by NSF (DMS-1451247 and DMS-1501095), and G. Zhang was supported by NSFC (11271078 and 11671094).

\section{Consequences of Theorem \ref{ThmGeneral}}

In this section, we present and prove some consequences of Theorem \ref{ThmGeneral}.

\subsection{Direct consequences of Theorem \ref{ThmGeneral}}

Firstly we can deduce Theorem \ref{ThmMain} easily from Theorem \ref{ThmGeneral} as follows.

\begin{proof}[Proof of Theorem \ref{ThmMain}]
Let $(X, T)$ be as in the statement of the theorem. Given $x\in X$ and $f\in C(X)$, it suffices to prove that in any increasing sequence $\{\tilde N_i\}$ of positive integers, there is a subsequence $\{N_i\}$ for which the convergence \eqref{EqThmGeneral} holds.
Indeed, one can always choose a subsequence $\{N_i\}$ such that $\displaystyle\Exp_{n=1}^{N_i}\delta_{T^nx}$ converges to a Borel probability measure $\rho$. By the assumption, $\rho$ is a convex combination of countably many ergodic invariant Borel probability measures, and each of these ergodic measures has discrete spectrum. Thus \eqref{EqThmGeneral} holds by Theorem \ref{ThmGeneral}.
\end{proof}

Observe that if the compact metric state space of a TDS contains at most countably many points, then the system satisfies the assumption of Theorem \ref{ThmMain}, as in this case ergodic invariant Borel probability measures of the system are supported on disjoint periodic orbits. And so we recover the following recent result obtained by Wei \cite{W16} as a direct corollary of Theorem \ref{ThmMain}.

\begin{theorem}\cite{W16}*{Theorem 5.16} \label{countably}
Assume that $X$ contains at most countably many points. Then
Conjecture \ref{ConjSarnak} holds for the system $(X, T)$.
\end{theorem}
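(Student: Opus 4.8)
The plan is simply to verify that $(X,T)$ satisfies the hypotheses of Theorem \ref{ThmMain} and then invoke that theorem. So I need to check two things: that $(X,T)$ admits at most countably many ergodic invariant Borel probability measures, and that each of them has discrete spectrum. Both follow at once from the claim that every ergodic invariant Borel probability measure on $X$ is the equidistributed measure on a periodic orbit.

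To establish the claim, let $\nu$ be an ergodic $T$-invariant Borel probability measure on $X$. Since $X$ is countable, $\nu$ is purely atomic, so there is a point $x_0\in X$ with $\nu(\{x_0\})>0$. Applying the Poincar\'e recurrence theorem to the positive-measure set $\{x_0\}$ shows that $T^n x_0=x_0$ for some $n\geq 1$; choosing $n$ minimal, $O=\{x_0,Tx_0,\dots,T^{n-1}x_0\}$ is a finite, hence closed, $T$-invariant subset of $X$. Invariance of $\nu$ gives $\nu(\{Ty\})=\nu(T^{-1}\{Ty\})\geq\nu(\{y\})$ for every $y$, so by induction $\nu(\{T^{k}x_0\})\geq\nu(\{x_0\})>0$ for all $k$, whence $\nu(O)>0$; ergodicity then forces $\nu(O)=1$. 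Since the equidistributed measure is the only $T$-invariant probability measure carried by a periodic orbit, $\nu=\frac1n\sum_{k=0}^{n-1}\delta_{T^{k}x_0}$.

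With the claim in hand, both hypotheses of Theorem \ref{ThmMain} are immediate. Distinct periodic orbits are disjoint finite subsets of the countable set $X$, so there are only countably many of them, hence only countably many ergodic invariant measures. Moreover, the measure-preserving system attached to the equidistributed measure on a period-$n$ orbit is the cyclic permutation of $n$ points, a finite system, which trivially has discrete spectrum since the characters of $\bZ/n\bZ$ form an orthonormal eigenbasis of $L^2$. Theorem \ref{ThmMain} therefore yields Conjecture \ref{ConjSarnak} for $(X,T)$.

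I do not anticipate any real obstacle here: the only step that requires a moment's thought is the reduction of an atomic ergodic measure to a single periodic orbit, where the combination of Poincar\'e recurrence with the $T$-invariance of a finite orbit and ergodicity does all the work; everything else is a direct appeal to Theorem \ref{ThmMain}.
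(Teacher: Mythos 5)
Your proposal is correct and follows essentially the same route as the paper, which deduces the result from Theorem \ref{ThmMain} by observing that on a countable space every ergodic invariant Borel probability measure is the equidistributed measure on a periodic orbit, so there are at most countably many of them and each (being a finite cyclic system) has discrete spectrum. You merely supply the details (atomicity, Poincar\'e recurrence, ergodicity forcing full mass on the orbit) that the paper leaves as a one-line observation.
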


Recently, in \cite{FJ15} Fan and Jiang related the M\"obius disjointness with the notion of \emph{stable in the mean in the sense of Lyapunov} or simply \emph{mean-L-stable}, which was introduced in \cite{F51} by Fomin
 when studying ergodic systems with discrete spectrum and further discussed in \cite{O52, A59, LTY15, DG15, FJ15}.

In the following we will deduce from Theorem \ref{ThmGeneral} the following Proposition \ref{ProGeneral}, which allows us to recover the M\"obius function case \cite[Corollary 1]{FJ15} of Fan and Jiang's recent work \cite {FJ15}.
As implied by a characterization due to Downarowicz-Glasner \cite[Theorem 2.1]{DG15} and Li-Tu-Ye \cite[Theorem 3.8]{LTY15}, each minimal mean-L-stable TDS is an isomorphic extension of a minimal rotation over a compact abelian metric group, and then any minimal mean-L-stable TDS (and hence the system considered in Proposition \ref{ProGeneral}) has zero topological entropy. See \cite{DG15} for the detailed definition of an isomorphic extension.

Recall that a TDS $(X, T)$ is \emph{mean-L-stable} if for any $\epsilon> 0$ there is $\delta> 0$ such that $d (x_1, x_2)< \delta$ implies $d (T^n x_1, T^n x_2)< \epsilon$ for all $n\in \bN$ except a set of upper density less than $\epsilon$.
 A \emph{minimal} set is a nonempty closed invariant set $K\subset X$ such that the subsystem $(K, T)$ is minimal.
 Now let $\nu$ be an invariant Borel probability measure of $(X,T)$ and $K\in \mathcal{B}_X$. We say that $\nu$ is \emph{supported} on $K$ if $\nu (K)= 1$.

\begin{proposition}\label{ProGeneral}
Let $(X,T)$ be a TDS with $x\in X$ and $\{N_i: i\in \mathbb{N}\}\subset \mathbb{N}$ tend to $\infty$, such that the sequence $\displaystyle\Exp_{n=1}^{N_i}\delta_{T^n (x)}$ converges to a Borel probability measure $\rho$ (which is obviously invariant). Suppose that $\rho$ is supported on countably many minimal mean-L-stable subsystems $(X_j, T)$. Then
\begin{equation*}\label{EqProGeneral1}
\lim_{i\to\infty}\Exp_{n=1}^{N_i}f(T^nx)\mu(n)=0, \forall f\in C(X).
\end{equation*}
\end{proposition}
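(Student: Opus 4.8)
The plan is to derive Proposition \ref{ProGeneral} from Theorem \ref{ThmGeneral} by checking that, under the stated hypothesis, the limiting measure $\rho$ is a convex combination of at most countably many ergodic invariant Borel probability measures, each having discrete spectrum; once this is established, Theorem \ref{ThmGeneral} applies to $(X,T)$, $x$, $\{N_i\}$ and $\rho$ word for word.

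First I would invoke the structure theory of minimal mean-L-stable systems. By the characterization of Downarowicz--Glasner \cite[Theorem 2.1]{DG15} (equivalently Li--Tu--Ye \cite[Theorem 3.8]{LTY15}), each minimal mean-L-stable subsystem $(X_j,T)$ is an isomorphic extension, via the factor map to its maximal equicontinuous factor, of a minimal rotation over a compact abelian metric group. From this I would extract the two facts I actually need: that $(X_j,T)$ is uniquely ergodic, with unique invariant Borel probability measure $\nu_j$, and that the measure-preserving system $(X_j,\mathcal{B}_{X_j},\nu_j,T)$ is measure-theoretically isomorphic to that minimal rotation, so that $\nu_j$ is ergodic and has discrete spectrum. (Unique ergodicity follows since the factor is uniquely ergodic and an isomorphic extension is one-to-one on a full-measure set; the isomorphism then transports discrete spectrum back to $\nu_j$.)

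Next I would sort out the ergodic decomposition of $\rho$. Distinct minimal sets are disjoint, since the intersection of two minimal sets is a closed invariant subset of each and hence is empty or equal to each; so I may assume the $X_j$ are pairwise distinct and therefore pairwise disjoint. Each $X_j$ is closed, so $Y:=\bigcup_j X_j$ is Borel with $\rho(Y)=1$, whence $\sum_j\rho(X_j)=1$. For every index $j$ with $\rho(X_j)>0$, the normalized restriction $\rho(X_j)^{-1}\rho|_{X_j}$ is a $T$-invariant Borel probability measure concentrated on $X_j$, hence coincides with $\nu_j$ by unique ergodicity of $(X_j,T)$. Therefore $\rho=\sum_{j:\,\rho(X_j)>0}\rho(X_j)\,\nu_j$ exhibits $\rho$ as a convex combination of at most countably many ergodic invariant Borel probability measures with discrete spectrum, and Theorem \ref{ThmGeneral} yields $\lim_{i\to\infty}\Exp_{n=1}^{N_i}f(T^nx)\mu(n)=0$ for all $f\in C(X)$, as claimed.

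The only delicate point is the first step: passing from the abstract notion of an isomorphic extension to the two concrete consequences (unique ergodicity of $(X_j,T)$ and discrete spectrum of its invariant measure). I do not expect a genuine obstacle here, as both are part of, or immediate from, the cited characterizations of Downarowicz--Glasner and Li--Tu--Ye; alternatively one could argue directly that a minimal mean-L-stable system is uniquely ergodic (its Cesàro averages of continuous functions converge uniformly) and that the unique invariant measure has discrete spectrum. Everything else is routine measure-theoretic bookkeeping feeding into Theorem \ref{ThmGeneral}.
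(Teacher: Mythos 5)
Your argument is correct and follows essentially the paper's own route: unique ergodicity of each minimal mean-L-stable subsystem $(X_j,T)$ yields measures $\nu_j$, the Downarowicz--Glasner / Li--Tu--Ye characterization (isomorphic extension of a minimal group rotation) gives discrete spectrum of each $\nu_j$, $\rho$ is expressed as a countable convex combination of the $\nu_j$, and Theorem \ref{ThmGeneral} concludes. The only cosmetic difference is that the paper quotes unique ergodicity directly (Oxtoby, or Li--Tu--Ye, Corollary 3.4) rather than extracting it from the isomorphic-extension characterization --- exactly the fallback you mention --- and your explicit restriction argument for $\rho=\sum_j\rho(X_j)\,\nu_j$ merely fills in a step the paper treats as routine.
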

\begin{proof}
Each TDS $(X_j, T)$ is uniquely ergodic by \cite[(6.4)]{O52} (see also \cite[Corollary 3.4]{LTY15}), and set $\nu_j$ to be the unique ergodic corresponding invariant Borel probability measure. In particular,
$\rho$ is a convex combination of these countably many $\nu_j$. By the characterization of a minimal mean-L-stable TDS in \cite{DG15, LTY15}, each $(X_j, T)$ is an isomorphic extension of a minimal rotation over a compact abelian metric group, and hence $\nu_j$ has discrete spectrum. Then the conclusion follows from Theorem \ref{ThmGeneral}.
\end{proof}

As a byproduct of Proposition \ref{ProGeneral} we have the following result. Note that each mean-L-stable TDS has zero topological entropy (see Remark \ref{RmkMLSEnt}).

\begin{theorem}\label{ThmMLS}
Conjecture \ref{ConjSarnak} holds for any mean-L-stable TDS.
\end{theorem}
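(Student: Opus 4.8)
The plan is to reduce the general mean-L-stable case to Proposition \ref{ProGeneral} by a decomposition-into-minimal-sets argument. Fix a mean-L-stable TDS $(X,T)$, a point $x\in X$, a function $f\in C(X)$, and (passing to a subsequence, exactly as in the proof of Theorem \ref{ThmMain}) a sequence $\{N_i\}$ along which $\Exp_{n=1}^{N_i}\delta_{T^n(x)}$ converges to an invariant Borel probability measure $\rho$. By Proposition \ref{ProGeneral} it suffices to show that $\rho$ is supported on countably many minimal mean-L-stable subsystems. First I would record that every subsystem of a mean-L-stable system is again mean-L-stable: the $\delta$--$\epsilon$ condition is inherited verbatim by any closed invariant subset, so in particular every minimal subset $(K,T)\subset(X,T)$ is minimal and mean-L-stable, and by \cite[(6.4)]{O52} (equivalently \cite[Corollary 3.4]{LTY15}) each such $(K,T)$ is uniquely ergodic.

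The main step is to show that a mean-L-stable TDS has only countably many minimal subsets, and that every ergodic invariant measure is supported on one of them. For the first point, I would use the fact (see Remark \ref{RmkMLSEnt}, or the Downarowicz--Glasner / Li--Tu--Ye characterization) that a mean-L-stable system is, in an appropriate sense, ``almost equicontinuous''; more concretely, mean-L-stability gives, for each $\epsilon>0$, a $\delta>0$ such that $d(x_1,x_2)<\delta$ forces $d(T^nx_1,T^nx_2)<\epsilon$ off a set of upper density $<\epsilon$. A standard consequence is that two distinct minimal subsets must be separated by a definite distance depending only on their ``size'': if $K_1\ne K_2$ are minimal, pick $y_1\in K_1$, $y_2\in K_2$; were $d(y_1,y_2)<\delta$ one could push forward and, using that orbit closures of $y_1,y_2$ are all of $K_1,K_2$, derive that $K_1$ and $K_2$ are within $\epsilon$ of each other in Hausdorff distance for a positive-density set of times — and running $\epsilon\to 0$ forces $K_1=K_2$. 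Hence the minimal subsets form a collection any two of whose points are $\delta(\epsilon)$-separated once one fixes $\epsilon$; covering the compact space $X$ by finitely many $\delta(\epsilon)$-balls for each $\epsilon=1/k$ and taking a union over $k$ shows there are at most countably many minimal subsets $\{X_j\}$.

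Next, every ergodic invariant measure $\nu$ of $(X,T)$ is supported on one of the $X_j$: since mean-L-stable systems are uniquely ergodic on each minimal set and, more to the point, the ergodic measures of an almost-equicontinuous (in particular mean-L-stable) system are exactly the Haar-type measures coming from the isomorphic-extension structure over the maximal equicontinuous factor, the $\nu$-generic points recur into a single minimal set; alternatively, $\supp\nu$ is a closed invariant set, hence contains a minimal subset $X_j$, and by ergodicity plus the characterization of \cite{DG15,LTY15} (each minimal subsystem being an isomorphic extension of a rotation, with a unique invariant measure) one gets $\nu(X_j)=1$. Applying the ergodic decomposition of $\rho$ and the fact that it charges only these countably many ergodic measures $\nu_j$ (each supported on the minimal mean-L-stable $X_j$), we conclude that $\rho$ is supported on countably many minimal mean-L-stable subsystems, and Proposition \ref{ProGeneral} yields \eqref{EqConj} along $\{N_i\}$; since the subsequence was arbitrary, the full limit vanishes. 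The main obstacle I expect is the countability of the minimal sets — pinning down the precise quantitative separation between distinct minimal subsets from the density-of-exceptional-times formulation of mean-L-stability, rather than from a clean equicontinuity statement, is the delicate point; if that turns out to be awkward one should instead invoke directly the Downarowicz--Glasner characterization that a minimal mean-L-stable system is an isomorphic extension of an equicontinuous one and argue via the maximal equicontinuous factor of $(X,T)$ as a whole.
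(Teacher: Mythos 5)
Your reduction to Proposition \ref{ProGeneral} is the right target, but the main step you rely on --- that a mean-L-stable TDS has at most countably many minimal subsets, proved via a definite $\delta(\epsilon)$-separation between distinct minimal sets --- is false, and this is a genuine gap rather than a technical nuisance. The identity map on $[0,1]$ is mean-L-stable (take $\delta=\epsilon$ in the definition), yet every point is a fixed point, so it has uncountably many minimal subsets, and distinct minimal sets can be arbitrarily close; more generally any equicontinuous system is mean-L-stable and may carry uncountably many minimal sets and uncountably many ergodic measures. Your separation argument cannot work: closeness of $y_1\in K_1$ and $y_2\in K_2$ at scale $\delta(\epsilon)$ only yields that $K_1$ and $K_2$ are roughly $\epsilon$-close in Hausdorff distance, and you cannot ``run $\epsilon\to 0$'' for a fixed pair because the required hypothesis $d(y_1,y_2)<\delta(\epsilon')$ is not available for smaller $\epsilon'$; the identity map shows no such rigidity is true. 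The fallback you suggest (arguing through the maximal equicontinuous factor of $(X,T)$ as a whole) does not repair this, since the global system need not have countably many minimal sets or ergodic measures.

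The correct (and much shorter) route, which is the one the paper takes, is to localize to the point: let $X_*$ be the orbit closure of $x$. Then $(X_*,T)$ is a transitive mean-L-stable system, hence \emph{uniquely ergodic} by \cite[(6.4)]{O52} or \cite[Corollary 3.4]{LTY15} --- the very fact you quote, but which you apply only to minimal subsets rather than to the (possibly non-minimal) orbit closure of $x$. Consequently $\displaystyle\Exp_{n=1}^{N}\delta_{T^n(x)}$ converges along the \emph{full} sequence to the unique invariant measure $\nu_x$ of $(X_*,T)$, which is supported on the unique minimal subset of $(X_*,T)$; that minimal subset is again mean-L-stable, so Proposition \ref{ProGeneral} applies with a single minimal subsystem, no subsequence extraction and no countability statement about the minimal sets of $X$ being needed. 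Your observation that mean-L-stability passes to closed invariant subsets is correct and is exactly what is used here, but the global countability claim should be deleted from the argument.
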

\begin{proof}
Let $(X, T)$ be a mean-L-stable TDS and $x\in X$. Set $X_*$ to be the closure of the orbit $\{T^n x: n\in \bN\}$. Then the TDS $(X_*, T)$ is uniquely ergodic (again by \cite[(6.4)]{O52} or \cite[Corollary 3.4]{LTY15}), and set $\nu_x$ to be the unique corresponding invariant Borel probability measure. In particular, the sequence $\displaystyle\Exp_{n= 1}^N \delta_{T^n (x)}$ converges to $\nu_x$, which is necessarily supported on the unique minimal subsystem of $(X_*, T)$, which is also mean-L-stable. Then the conclusion follows from Proposition \ref{ProGeneral}.
\end{proof}

\subsection{An alternative proof of Theorem \ref{ThmMLS}}

In fact, with the help of Proposition \ref{ProGeneral}, we can recover \cite[Corollary 1]{FJ15} and show that \cite[Corollary 1]{FJ15} also implies Theorem \ref{ThmMLS}.
Following \cite{FJ15}, we say that a TDS $(X, T)$ is \emph{minimally mean attractable} if for each $x\in X$ there exists a minimal subset $M_x$ of $(X, T)$ such that $x$ is \emph{mean attracted} to $M_x$, that is, for any $\epsilon>0$ there is $z\in K_x$ with
$\displaystyle\limsup_{N\rightarrow \infty} \Exp_{n= 1}^N d (T^n x, T^n z)< \epsilon$; and that $(X, T)$ is \emph{minimally mean-L-stable} if every minimal subsystem is mean-L-stable.

We remark that the second part of Proposition \ref{mmammls} is in fact \cite[Corollary 1]{FJ15}. We give here an alternative proof of it based on Proposition \ref{ProGeneral}.

\begin{proposition} \label{mmammls}
Let $(X, T)$ be a TDS.
\begin{itemize}

\item If $(X, T)$ is mean-L-stable, then it is both minimally mean-L-stable and minimally mean attractable.

\item
If $(X, T)$ is both minimally mean-L-stable and minimally mean attractable, then it satisfies Conjecture \ref{ConjSarnak}.
\end{itemize}
\end{proposition}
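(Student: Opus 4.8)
\textbf{Proof plan for Proposition \ref{mmammls}.}

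The plan is to handle the two bullet points in turn, with the first being essentially definitional unwinding and the second being the substantive reduction to Proposition \ref{ProGeneral}.

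For the first bullet, suppose $(X,T)$ is mean-L-stable. To see it is minimally mean-L-stable, I would argue that mean-L-stability passes to any subsystem: if $K\subset X$ is closed and invariant, then the $\epsilon$-$\delta$ condition defining mean-L-stability is inherited verbatim by points of $K$ (the exceptional set of integers is the same, and the metric on $K$ is the restriction of $d$), so in particular every minimal subsystem of $(X,T)$ is mean-L-stable. For minimal mean attractability, fix $x\in X$. I would take $X_*=\overline{\{T^nx:n\in\bN\}}$; as noted in the proof of Theorem \ref{ThmMLS}, mean-L-stability makes $(X_*,T)$ uniquely ergodic, and its unique invariant measure $\nu_x$ is supported on the unique minimal subset $M_x\subset X_*$. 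One then wants a point $z\in M_x$ with $\limsup_N\Exp_{n=1}^N d(T^nx,T^nz)<\epsilon$; this is exactly the ``mean attracted'' conclusion, and it follows from the standard fact (due to Fomin \cite{F51}, Oxtoby \cite{O52}, see also \cite{LTY15}) that in a mean-L-stable system every orbit closure contains a point whose orbit stays uniformly-in-mean within any prescribed distance of the given orbit — intuitively, one picks $z$ in the minimal set obtained as a limit of shifts of $x$ and uses the uniform mean-closeness provided by mean-L-stability. Any $M_x$ produced this way witnesses minimal mean attractability.

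For the second bullet, suppose $(X,T)$ is both minimally mean-L-stable and minimally mean attractable, and fix $x\in X$ and $f\in C(X)$. As in the proof of Theorem \ref{ThmMain}, it suffices to show that every increasing sequence of integers has a subsequence $\{N_i\}$ along which $\Exp_{n=1}^{N_i}f(T^nx)\mu(n)\to 0$. Pass to a subsequence so that $\Exp_{n=1}^{N_i}\delta_{T^nx}$ converges to an invariant measure $\rho$. The key point is to show that $\rho$ is supported on countably many (in fact finitely many, or even one) minimal mean-L-stable subsystems, so that Proposition \ref{ProGeneral} applies. Here I would use minimal mean attractability: $x$ is mean attracted to some minimal set $M_x$, so for every $\epsilon>0$ there is $z\in M_x$ with $\limsup_N\Exp_{n=1}^N d(T^nx,T^nz)<\epsilon$. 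Since $M_x$ is minimal and $(X,T)$ is minimally mean-L-stable, $(M_x,T)$ is mean-L-stable, hence uniquely ergodic with unique invariant measure $\nu$. The mean-closeness of the orbit of $x$ to that of $z\in M_x$ forces any weak-$*$ limit of $\Exp_{n=1}^{N_i}\delta_{T^nx}$ to be within (a modulus-of-continuity factor of) $\epsilon$ of $\Exp_{n=1}^{N_i}\delta_{T^nz}$ when tested against $f$, and the latter converges to $\int f\,d\nu$; letting $\epsilon\to 0$ shows $\rho=\nu$ is supported on the single minimal mean-L-stable subsystem $M_x$. Now Proposition \ref{ProGeneral} (with the single subsystem $(M_x,T)$) gives $\lim_i\Exp_{n=1}^{N_i}f(T^nx)\mu(n)=0$, completing the argument.

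The main obstacle is the second bullet's step identifying $\rho$: one must convert the ``mean attracted'' hypothesis, which only controls the time-averaged distance $\Exp_{n=1}^N d(T^nx,T^nz)$, into genuine weak-$*$ control on the empirical measures. The clean way is to note that for $f\in C(X)$ with modulus of continuity $\omega_f$, one has $|\Exp_{n=1}^{N}f(T^nx)-\Exp_{n=1}^{N}f(T^nz)|\le \Exp_{n=1}^{N}\omega_f\big(d(T^nx,T^nz)\big)$, and then split the average according to whether $d(T^nx,T^nz)$ is large or small; the large part has small density (by the mean-attraction bound and Markov's inequality, after choosing $\epsilon$ small) and contributes at most $2\|f\|_\infty$ times that density, while the small part contributes at most $\omega_f$ of a small number. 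This makes $\rho$ agree with $\nu$ on a dense set of $f$'s, hence everywhere. Once this is in place, everything reduces mechanically to Proposition \ref{ProGeneral}, and there is no further difficulty.
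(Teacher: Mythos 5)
Your proposal is correct and follows essentially the same route as the paper: minimal mean-L-stability is inherited by subsystems; mean attractability comes from pushing a shifted orbit point $\delta$-close to the unique minimal set of the orbit closure and invoking the uniform-in-mean form of mean-L-stability (which the paper makes precise via \cite[Lemma 3.1]{LTY15} together with surjectivity of $T$ on the minimal set); and the second bullet identifies the limit of the empirical measures with the unique invariant measure $\nu_x$ of $M_x$ by the same Markov/modulus-of-continuity splitting before applying Proposition \ref{ProGeneral}. The only cosmetic difference is that the paper proves convergence of the full sequence $\Exp_{n=1}^N\delta_{T^n x}$ to $\nu_x$, whereas you pass to convergent subsequences, which is equally sufficient.
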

\begin{proof}
First assume that $(X, T)$ is mean-L-stable. It is obviously minimally mean-L-stable. Let $x\in X$, and set $X_*$ to be the closure of $\{T^n x: n\in \bN\}$. Then TDS $(X_*, T)$ is uniquely ergodic and hence contains a uniquely minimal subset $X_x$. Let $\epsilon> 0$. By \cite[Lemma 3.1]{LTY15} we can select $\delta> 0$ such that $d (x_1, x_2)< \delta$ implies $\displaystyle\limsup_{N\rightarrow \infty} \Exp_{n= 1}^N d (T^n x_1, T^n x_2)< \epsilon$. Take $m\in \bN$ with $d (T^m x, X_x)< \delta$ and choose $x_*\in X_x$ with $d (T^m x, T^m x_*)< \delta$. Thus $\displaystyle\limsup_{N\rightarrow \infty} \Exp_{n= 1}^N d (T^n x, T^n x_*)< \epsilon$, and so $(X, T)$ is minimally mean attractable.

Now assume that $(X, T)$ is not only minimally mean-L-stable but also minimally mean attractable, and let $x\in X$. Fix any $f\in C (X)$ and $\epsilon> 0$. Take $\delta> 0$ such that $d (x_1, x_2)< \delta$ implies $|f (x_1)- f (x_2)|< \frac{\epsilon}{2}$ and choose $M> 1$ to be a finite upper bound for $|f|$. By the assumption, $x$ is mean attracted to a minimal subset $M_x$ of $(X, T)$, and so there is $z\in M_x$ with
$\displaystyle\limsup_{N\rightarrow \infty} \Exp_{n= 1}^N d (T^n x, T^n z)< \frac{\delta \epsilon}{4 M}$,
which implies
\begin{equation} \label{estimate}
\frac{\epsilon}{4 M}> \limsup_{N\rightarrow \infty} \frac{1}{N} \# \{1\le n\le N: d (T^n x, T^n z)\ge \delta\}.
\end{equation}
By the construction of $\delta$, it is easy to obtain $\displaystyle\limsup_{N\rightarrow \infty} \Exp_{n= 1}^N |f (T^n x)- f (T^n z)|< \epsilon$ from \eqref{estimate}. Again by the assumption $(M_x, T)$ is mean-L-stable, and then it admits a unique invariant Borel probability measure $\nu_x$. In fact, the sequence $\displaystyle\Exp_{n= 1}^N \delta_{T^n (z)}$ converges to $\nu_x$, and hence $\displaystyle\limsup_{N\rightarrow \infty} \left|\Exp_{n= 1}^N f (T^n x)- \int_X f \di \nu_x\right|< \epsilon$. By the arbitrariness of $\epsilon$ and $f$ we obtain that the sequence $\displaystyle\Exp_{n= 1}^N \delta_{T^n (x)}$ also converges to the measure $\nu_x$.
Now applying Proposition \ref{ProGeneral} to the point $x$ we obtain the conclusion.
\end{proof}

\begin{remark}\label{RmkMLSEnt}
By the proof of Proposition \ref{mmammls}, if a TDS $(X, T)$ is both minimally mean-L-stable and minimally mean attractable, then each ergodic invariant Borel probability measure is supported on a minimal mean-L-stable subsystem and hence has zero measure-theoretic entropy by \cite{DG15, LTY15}, thus the system $(X, T)$ has zero topological entropy. In particular, by Proposition \ref{mmammls}, mean-L-stability implies zero topological entropy.
\end{remark}

\section{Proof of Theorem \ref{ThmGeneral}}

From now on, we shall fix the point $x\in X$, the sequence $\{N_i\}$, the measure $\rho$ as in Theorem \ref{ThmGeneral}. By the assumption, we can write $\rho$ as a countable average $\sum_{i} w_i \rho_i$, where $\sum_i w_i=1$ with each $w_i>0$, and the $\rho_i$'s are distinct ergodic invariant Borel probability measures of $(X, T)$ and each of these ergodic measures has discrete spectrum.

We shall also fix any $f\in C (X)$ and $\epsilon\in(0,\frac1{30})$. Without loss of generality, we assume $|f|\leq 1$. It suffices to prove
\begin{equation}\label{EqEpsilon}
\limsup_{i\to\infty}\left|\Exp_{n=1}^{N_i}f(T^nx)\mu(n)\right|\leq 30 \epsilon.
\end{equation}

\subsection{Approximation by functions on tori}

Clearly there are finitely many of $\rho_i$'s, say $\rho_1, \cdots, \rho_J$, such that
\begin{equation}\label{EqFiniteMin}\sum_{j=1}^Jw_j>1-\epsilon.\end{equation}

Denote $\bT^d=\bR^d/\bZ^d$ for $d\in \mathbb{N}$ and $e(\theta)=\exp(2\pi i\theta)$ for $\theta\in\bT^1$ or $\bR$.

\begin{proposition}\label{PropProjApprox}For some integer $d\geq 1$, there exist
\begin{itemize}
\item a continuous function $h\in C(X)$ with $|h|<2$ such that
\begin{equation}\label{EqPropProjApprox1}\int_X|f-h|\di\rho<7\epsilon;\end{equation}
\item mutually disjoint compact subsets $A_1,\cdots, A_J\subset X$ with each
\begin{equation}\label{EqPropProjApprox2}\rho_j(A_j)>1-\epsilon^2;\end{equation}
\item for each $1\leq j\leq J$, a vector $\alpha_j\in\bT^d$ and a continuous map $p_j: A_j\rightarrow \bT^d$ such that for each nonnegative integer $l$ and any point $x_*\in X$, if $x_*$ and $T^l x_*$ are both contained in $A_j$, then
    \begin{equation}\label{EqPropProjApprox3}p_j(T^l x_*)=R_{\alpha_j}^l (p_j(x_*)),\end{equation}
     where
$R_{\alpha_j}: \bT^d\rightarrow \bT^d$ stands for the rotation $z\mapsto z+ \alpha_j$;

\item for each $1\leq j\leq J$, a continuous function $h'_j\in C(\bT^d)$ of the form \begin{equation}\label{EqPropProjApprox4a}h'_j(z)=\sum_{m=1}^{M_j}a_{j,m}e(\xi_{j,m}\cdot z),\end{equation} with integer $M_j\ge 1$, coefficients $a_{j,1},\cdots, a_{j,M_j}\in\bC$ and frequencies $\xi_{j,1},\cdots, \xi_{j,M_j}\in\bZ^d$, such that $|h'_j|<2$ and
\begin{equation}\label{EqPropProjApprox4}h(x_*)=h'_j(p_j(x_*)), \forall x_*\in A_j.\end{equation}
\end{itemize}
 \end{proposition}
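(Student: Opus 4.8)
The plan is to handle each ergodic component $\rho_j$ ($1\le j\le J$) separately, exploiting that it has discrete spectrum and is therefore, by Halmos--von Neumann, measurably a group rotation that can be approximated by a finite-dimensional torus rotation, and then to glue the pieces together using that distinct ergodic measures are mutually singular. Concretely, first I would recall that discrete spectrum means $L^2(\rho_j)$ is spanned by eigenfunctions of $T$, the eigenvalue group $\Lambda_j\subset\bT^1$ is countable, and for each $\lambda\in\Lambda_j$ one can fix a modulus-one eigenfunction $g^{(j)}_\lambda$ with $g^{(j)}_\lambda\circ T=e(\lambda)g^{(j)}_\lambda$ $\rho_j$-a.e. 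Since $f\in L^2(\rho_j)$, pick a finite $S_j\subset\Lambda_j$ and coefficients with $\big\|f-\sum_{\lambda\in S_j}c_{j,\lambda}g^{(j)}_\lambda\big\|_{L^2(\rho_j)}<\epsilon$.

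Lifting each $g^{(j)}_\lambda$ ($\lambda\in S_j$) to an $\bR/\bZ$-valued measurable function and using these lifts as coordinates gives a measurable map $q_j\colon X\to\bT^{d}$ (take $d=\sum_j\#S_j$ and let the $j$-th map occupy its own coordinate block, the others being identically $0$) together with $\alpha_j\in\bT^d$, the vector of the corresponding eigenvalues padded by zeros, such that $q_j\circ T=q_j+\alpha_j$ holds $\rho_j$-a.e.\ and $\sum_{\lambda\in S_j}c_{j,\lambda}g^{(j)}_\lambda=\hat h_j\circ q_j$, where $\hat h_j(z)=\sum_{\lambda\in S_j}c_{j,\lambda}e(z_\lambda)$ is a trigonometric polynomial on $\bT^d$. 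Thus $\hat h_j\circ q_j$ approximates $f$ in $L^2(\rho_j)$ to within $\epsilon$. The step I expect to be the main obstacle is that \eqref{EqPropProjApprox3} must hold \emph{for every} point of $A_j$ and \emph{every} $l\ge0$, whereas so far the intertwining $q_j\circ T=q_j+\alpha_j$ is only almost everywhere. I would repair this by shrinking the bad null set: set $G^\sharp_j=\{y:q_j(Ty)=q_j(y)+\alpha_j\}$ and $Y_j=\bigcap_{n\ge0}T^{-n}G^\sharp_j$, which has full $\rho_j$-measure and is forward $T$-invariant, so $q_j(T^ly)=q_j(y)+l\alpha_j$ for every $y\in Y_j$ and every $l\ge0$. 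Then, by inner regularity of $\rho_j$ together with Lusin's theorem, choose a compact $K_j\subset Y_j$ with $\rho_j(K_j)>1-\tfrac12\epsilon^2$ on which $q_j$ is continuous; using mutual singularity of the $\rho_j$, fix pairwise disjoint Borel sets $V_j$ with $\rho_j(V_j)=1$, and (inner regularity once more) a compact $A_j\subset K_j\cap V_j$ with $\rho_j(A_j)>1-\epsilon^2$. The $A_j$ are pairwise disjoint, \eqref{EqPropProjApprox2} holds, and with $p_j:=q_j|_{A_j}$ (continuous) the inclusion $A_j\subset Y_j$ forces \eqref{EqPropProjApprox3}.

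It remains to build $h$ and the $h'_j$ while keeping the modulus strictly below $2$. Let $\chi$ be the nearest-point projection of $\bC$ onto the closed unit disk; it is $1$-Lipschitz and fixes the disk, so $\chi\circ f=f$ since $|f|\le1$. Put $g_j:=\chi\circ\hat h_j\in C(\bT^d)$, so that $|g_j|\le1$ and $g_j\circ q_j=\chi\circ(\hat h_j\circ q_j)$ is within $\epsilon$ of $f$ in $L^1(\rho_j)$ (here I use $\chi$ is $1$-Lipschitz and $\rho_j$ is a probability measure). By Stone--Weierstrass, pick a trigonometric polynomial $h'_j$ on $\bT^d$ with $\|h'_j-g_j\|_\infty<\epsilon$; then $|h'_j|<1+\epsilon<2$, \eqref{EqPropProjApprox4a} holds, and $\|h'_j\circ q_j-f\|_{L^1(\rho_j)}<2\epsilon$. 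Since the $A_j$ are disjoint compacta, the function equal to $h'_j\circ p_j$ on each $A_j$ is continuous on $\bigcup_jA_j$; extend it to $X$ by Tietze and post-compose with the nearest-point projection of $\bC$ onto a disk of radius $r\in(1+\epsilon,2)$ to obtain $h\in C(X)$ with $|h|\le r<2$ and $h|_{A_j}=h'_j\circ p_j$, which is \eqref{EqPropProjApprox4}.

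Finally, for \eqref{EqPropProjApprox1} I would split $\int_X|f-h|\,d\rho$ over the ergodic decomposition $\rho=\sum_iw_i\rho_i$: the components not among $\rho_1,\dots,\rho_J$ have total weight $<\epsilon$ by \eqref{EqFiniteMin} and contribute at most $3\epsilon$, using $|f-h|<3$; and for each $j\le J$, since $h=h'_j\circ q_j$ on $A_j$, one has $\int_X|f-h|\,d\rho_j\le\int_X|f-h'_j\circ q_j|\,d\rho_j+3\,\rho_j(X\setminus A_j)<2\epsilon+3\epsilon^2$. Summing against the weights $w_j$ (whose sum over $j\le J$ is at most $1$) gives $\int_X|f-h|\,d\rho<3\epsilon+2\epsilon+3\epsilon^2<7\epsilon$, because $\epsilon<\tfrac1{30}$. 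This completes the construction; the only genuinely subtle point, as noted, is making the a.e.\ intertwining relation hold honestly along all forward orbit segments inside $A_j$, which is why one must pass to $Y_j$ before extracting the compact sets and combine this with the Lusin continuity set and the singularity sets $V_j$.
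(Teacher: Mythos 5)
Your proposal is correct, and while its overall architecture (a measurable torus-valued factor map for each $\rho_j$, Lusin to get a compact set $A_j$ of continuity, mutual singularity to make the $A_j$ disjoint, Tietze to glue $h$, and the forward-invariant full-measure set $Y_j=\bigcap_{n\ge0}T^{-n}G^\sharp_j$ to upgrade the a.e.\ intertwining to \eqref{EqPropProjApprox3} — the paper simply asserts such a $Y_j$ in \eqref{EqPropProjApprox3a}) matches the paper, the approximation mechanism is genuinely different. The paper invokes the Halmos--von Neumann representation, embeds the group rotation into $\bT^{\bN}$, extends $f\circ(\phi_j|_{A_j})^{-1}$ by Tietze to a function $g'_j\in C(\bT^{\bN})$ with $|g'_j|\le1$, and approximates it \emph{uniformly} by $h'_j\circ\pi_d$; this yields the pointwise bound $|f-h|<\epsilon$ on each $A_j$ and makes $|h'_j|<2$ automatic. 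You instead expand $f$ in eigenfunctions in $L^2(\rho_j)$, use finitely many of them as the coordinates of $q_j$ (so $d$, $\alpha_j$ and the frequencies come directly from the chosen eigenvalues), and then repair the unbounded sup norm of the resulting trigonometric polynomial by truncating with the $1$-Lipschitz projection $\chi$ onto the unit disk before re-approximating by a trigonometric polynomial; this avoids the Halmos--von Neumann theorem and the Pontryagin-type embedding into $\bT^{\bN}$, at the price of obtaining only $L^1(\rho_j)$-closeness of $f$ and $h$ on $A_j$ rather than uniform closeness — which is all that \eqref{EqPropProjApprox1} (and the rest of the paper) requires. Your final estimate also differs slightly but soundly: rather than arranging $\rho_i(Y_j)=0$ for $i\neq j$ and using the mass identity \eqref{EqTotalMass}, you integrate component by component in the ergodic decomposition, getting $\int_X|f-h|\,\di\rho<3\epsilon+2\epsilon+3\epsilon^2<7\epsilon$. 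All steps check out (in particular the truncation keeps $|h'_j|<1+\epsilon<2$ and the radial cut-off after Tietze preserves $h=h'_j\circ p_j$ on $A_j$), so this is a valid, somewhat more elementary proof of the proposition as stated.
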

\begin{proof}
By the Halmos-von Neumann Representation Theorem, any ergodic system with discrete spetrum is measurably isomorphic to a minimal rotation over a compact abelian metric group (with the normalized Haar measure).
It is standard that  a compact abelian metric group is topologically isomorphic to a closed subgroup of $\mathbb{T}^{\mathbb{N}}$ (see for example
\cite[Chapter 5, Corollary 1]{M77}).

Hence for each $j=1,\cdots, J$, there exist $\talpha_j\in \mathbb{T}^{\mathbb{N}}$ and a $\widetilde{R}_{\talpha_j}$-invariant ergodic Borel probability measure $\widetilde{\theta}_j$ on $\mathbb{T}^\mathbb{N}$,
 such that $(X,\mathcal{B}_X,\rho_j,T)$ is measurably isomorphic to $(\mathbb{T}^{\mathbb{N}}, \mathcal{B}_{\mathbb{T}^{\mathbb{N}}},\widetilde{\theta}_j,\widetilde{R}_{\talpha_j})$,
where $\widetilde{R}_{\talpha_j}(\omega)=\omega+\talpha_j$ for each $\omega\in \mathbb{T}^{\mathbb{N}}$.
 We write $\phi_j: (X,\mathcal{B}_X,\rho_j,T)\rightarrow (\mathbb{T}^{\mathbb{N}}, \mathcal{B}_{\mathbb{T}^{\mathbb{N}}},\widetilde{\theta}_j,\widetilde{R}_{\talpha_j})$ for the measurable isomorphism. Then there exists a subset $Y_j$ of full $\rho_j$-measure such that $\phi_j: Y_j\rightarrow \phi_j(Y_j)$ is invertible and measure-preserving and
\begin{equation}\label{EqPropProjApprox3a}
T Y_j\subset Y_j\ \text{and}\ \phi_j (T x_*)=\widetilde{R}_{\talpha_j} (\phi_j(x_*)), \forall x_*\in Y_j.
\end{equation}

Note that these countably many $\rho_i$'s are mutually singular, it makes no any difference to assume that these $Y_1, \cdots, Y_J$ are mutually disjoint, and additionally that $\rho_i (Y_j)= 0$ for each $1\le j\le J$ and all $i\neq j$ (no matter whether $i\in \{1, \cdots, J\}$ or not).

For each $1\leq j\leq J$, by Lusin's Theorem there exists a compact subset $A_j\subset Y_j$ such that $\rho_j(A_j)>1- \epsilon^2$ and $\phi_j|_{A_j}$, the restriction of $\phi_j$ over $A_j$, is continuous. Thus $\phi_j(A_j)$ is a compact subset of $\bT^\bN$ and $\phi_j|_{A_j}: A_j\rightarrow \phi_j(A_j)$ is a homeomorphism, moreover, $\widetilde{\theta}_j (\phi_j(A_j))=\rho_j(A_j)>1- \epsilon^2$.

Now for each $1\leq j\leq J$, the function $f\circ(\phi_j|_{A_j})^{-1}$ is continuous over $\phi_j(A_j)$, and then by Tietze Extension Theorem there exists a function $g'_j\in C(\mathbb{T}^{\mathbb{N}})$ with $|g'_j|\le 1$ and
$g'_j|_{\phi_j(A_j)}=f\circ (\phi_j|_{A_j})^{-1}$. Thus there exist $d\in \mathbb{N}$ and functions $h'_1,\cdots, h'_J\in C(\mathbb{T}^d)$
such that
\begin{align}\label{ineq-2}
\max_{1\le j\le J} \max_{\mathbb{T}^{\mathbb{N}}} |g'_j- h'_j\circ \pi_{d}|<\epsilon,
\end{align}
where $\pi_d: \mathbb{T}^{\mathbb{N}}\rightarrow \mathbb{T}^d$ denotes the projection of $\mathbb{T}^{\mathbb{N}}$ to the first $d$ coordinates.
As the Fourier basis $\{e(\xi\cdot z):\xi\in \mathbb{Z}^d\}$ generates a dense subspace of $C(\mathbb{T}^d)$, one may assume without loss of generality that each
$h'_j$ has the form of
\begin{align*}
h'_j(z)=\sum_{m=1}^{M_j}a_{j,m}e(\xi_{j,m}\cdot z)
\end{align*}
for some integer $M_j\in \mathbb{N}$, coefficients $a_{j,1},\cdots, a_{j,M_j}\in\bC$ and frequencies $\xi_{j,1},\cdots, \xi_{j,M_j}\in\bZ^d$.
It is clear that $|h'_j|\le 1+ \epsilon< 2$.

Now define $h_j\in C(A_j)$ as $h_j=h'_j\circ\pi_d\circ\phi_j|_{A_j}$ for each $1\le j\le J$. As these compact subsets $A_j\subset Y_j$ are mutually disjoint, by Tietze Extension Theorem we can find a function $h\in C(X)$ with $|h|< 2$ and $h=h_j$ over each $A_j$. Note that by \eqref{ineq-2}, $|f-h|=|f-h_j|<\epsilon$ over each $A_j$.
Furthermore,
\begin{equation}\label{EqTotalMass}\begin{aligned}\rho\left(\bigsqcup_{j=1}^JA_j\right)
=&\sum_{j=1}^J \rho (A_j)= \sum_{j=1}^Jw_j\rho_j(A_j)\\
>&\sum_{j=1}^Jw_j(1-\epsilon^2)\\
> &(1-\epsilon)(1-\epsilon^2)\ (\text{using \eqref{EqFiniteMin}})> 1-2\epsilon.
\end{aligned}\end{equation}

For each $1\le j\le J$, we take $p_j=\pi_{d}\circ \phi_j$ and $\alpha_j=\pi_{d}(\talpha_j)$.
Note that
\begin{equation*}\begin{aligned}
\int_X|f-h|\di\rho=&\int_{\bigsqcup_{j=1}^JA_j}|f-h|\di\rho+\int_{X\setminus\bigsqcup_{j=1}^JA_j}|f-h|\di\rho\\
=&\sum_{j=1}^J\int_{A_j}|f-h_j|w_j\di\rho_j+\int_{X\setminus\bigsqcup_{j=1}^JA_j}|f-h|\di\rho\\
\leq &\sum_{j=1}^J\epsilon w_j\rho_j(A_j)+\left(1-\rho\Big(\bigsqcup_{j=1}^JA_j\Big)\right)\cdot\left(\max_X|f|+\max_X|h|\right)\\
<&\epsilon+2\epsilon\cdot3\ (\text{using \eqref{EqTotalMass}})\ =7\epsilon,
\end{aligned}\end{equation*}
which implies
the inequality \eqref{EqPropProjApprox1}. Moreover, \eqref{EqPropProjApprox3} and \eqref{EqPropProjApprox4} follow respectively from \eqref{EqPropProjApprox3a} and the construction. This finishes the proof.
 \end{proof}

Recalling the assumption that the sequence $\displaystyle\Exp_{n=1}^{N_i}\delta_{T^n (x)}$ converges to the measure $\rho$, one has that the sequence
$\displaystyle\Exp_{n=1}^{N_i}|f(T^nx)-h(T^nx)|$ converges to $\int_X|f-h|\di\rho$. Thus by the inequality \eqref{EqPropProjApprox1}, if $i$ is large enough then
\begin{equation}\label{EqBirkhoff}
\left|\Exp_{n=1}^{N_i}f(T^nx)\mu(n)-\Exp_{n=1}^{N_i}h(T^nx)\mu(n)\right|\le \Exp_{n=1}^{N_i}|f(T^nx)-h(T^nx)|<7\epsilon.
\end{equation}

Therefore, to prove \eqref{EqEpsilon}, it suffices to estimate $\left|\displaystyle\Exp_{n=1}^{N_i}h(T^nx)\mu(n)\right|$.

\subsection{Decomposition into short orbit segments}

Now let $C$ be a large constant for the moment, which will be specified later, and select a large integer $L$ with
\begin{equation}\label{EqL}C\frac{\log\log L}{\log L}<\epsilon.\end{equation}

Further define for each $1\leq j\leq L$ a subset $B_j\subset A_j$ by
\begin{equation}\label{EqB}B_j=\left\{y\in A_j:\Exp_{l=0}^{L-1}\bfone_{A_j}(T^ly)\geq1-\epsilon\right\},\end{equation}
which is clearly a compact subset.
Note that, by \eqref{EqPropProjApprox2},
\begin{align*}
1- \epsilon^2< \rho_j(A_j)=&\int_X \Exp_{l=0}^{L-1}1_{A_j}(T^l y)\di\rho_j(y)\\
\le&(1-\rho_j(A_j\setminus B_j))+\rho_j(A_j\setminus B_j)(1-\epsilon)\\
=&1-\rho_j (A_j\setminus B_j)\epsilon,
\end{align*}
thus $\rho_j (A_j\setminus B_j)<\epsilon$ and
$\rho_j(B_j)>1-2\epsilon$.
Similar to \eqref{EqTotalMass}, we deduce that
\begin{equation}\label{EqTotalMassB}\rho \left(\bigsqcup_{j=1}^JB_j\right)>(1-\epsilon)(1-2\epsilon)>1-3\epsilon.\end{equation}

Finally, let $\eta>0$ be small enough such that
\begin{equation} \label{EqEta}
|h(T^ly)-h(T^ly')|<\epsilon \text{ whenever }d(y,y')<\eta\ \text{and}\ 0\leq l\leq L-1.
\end{equation}

For every $n\in \mathbb{N}$, choose once and forever $x_n\in\bigsqcup_{j=1}^J B_j$ such that
\begin{equation*}\label{EqNearest}d(T^nx,x_n)=\min_{x'\in\bigsqcup_{j=1}^JB_j}d(T^nx,x').\end{equation*}

\begin{lemma}\label{LemRec}
$\displaystyle\Exp_{n=1}^{N_i}\bfone_{d(T^nx, x_n)\geq\eta}<3\epsilon$ once $i$ is large enough.
\end{lemma}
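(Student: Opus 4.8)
The plan is to bound the density of times $n\le N_i$ for which $T^nx$ is far (distance $\ge\eta$) from the set $\bigsqcup_j B_j$, using only that $\Exp_{n=1}^{N_i}\delta_{T^nx}$ converges weakly to $\rho$ and that $\rho(\bigsqcup_j B_j)>1-3\epsilon$ by \eqref{EqTotalMassB}. The key observation is that $d(T^nx,x_n)\ge\eta$ exactly when $T^nx$ lies in the closed set $F_\eta:=\{y\in X:d(y,\bigsqcup_{j=1}^JB_j)\ge\eta\}$, since $x_n$ was chosen to realize the distance from $T^nx$ to the compact set $\bigsqcup_j B_j$. So $\Exp_{n=1}^{N_i}\bfone_{d(T^nx,x_n)\ge\eta}=\Exp_{n=1}^{N_i}\bfone_{F_\eta}(T^nx)$.

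The main step is to convert this indicator of a closed set into something continuous and pass to the limit. First I would note that $F_\eta$ is disjoint from the compact set $\bigsqcup_{j=1}^JB_j$, so I can pick a continuous function $g\in C(X)$ with $0\le g\le1$, $g\equiv1$ on $\bigsqcup_j B_j$, and $g\equiv0$ on $F_\eta$ (Urysohn, or simply $g(y)=\max(0,1-\eta^{-1}d(y,\bigsqcup_jB_j))$, which is continuous and takes the value $1$ on $\bigsqcup_j B_j$ and $0$ on $F_\eta$). Then $\bfone_{F_\eta}\le 1-g$ pointwise, so $\Exp_{n=1}^{N_i}\bfone_{F_\eta}(T^nx)\le\Exp_{n=1}^{N_i}(1-g)(T^nx)$. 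Since $1-g\in C(X)$ and $\Exp_{n=1}^{N_i}\delta_{T^nx}\to\rho$ weakly, the right-hand side converges to $\int_X(1-g)\di\rho=1-\int_X g\di\rho$. Because $g\equiv1$ on $\bigsqcup_j B_j$ and $0\le g\le1$ elsewhere, $\int_X g\di\rho\ge\rho(\bigsqcup_{j=1}^JB_j)>1-3\epsilon$, hence $\int_X(1-g)\di\rho<3\epsilon$. Therefore $\limsup_i\Exp_{n=1}^{N_i}\bfone_{d(T^nx,x_n)\ge\eta}\le\int_X(1-g)\di\rho<3\epsilon$, which gives the claimed strict inequality for all sufficiently large $i$.

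The only mild subtlety, and the step I would be most careful about, is the measurability/choice issue underlying $x_n$: one must know that the minimum defining $x_n$ is attained (true since $\bigsqcup_j B_j$ is a nonempty compact set, each $B_j$ being compact and $J$ finite) and that the event $\{d(T^nx,x_n)\ge\eta\}$ does not depend on which minimizer is chosen — indeed it is precisely the event $T^nx\in F_\eta$, independent of the selection. After that, everything is a routine application of the portmanteau-type inequality for weak convergence against the fixed continuous test function $1-g$, together with the mass bound \eqref{EqTotalMassB}. No compactness beyond that of $\bigsqcup_j B_j$ and no properties of $\mu$ or of the discrete-spectrum hypothesis are needed here; this lemma is purely about the empirical measures along the orbit of $x$.
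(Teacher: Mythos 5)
Your proof is correct and follows essentially the same route as the paper: the paper also replaces the indicator by a continuous function (via Tietze/Urysohn) equal to $0$ on $\bigsqcup_{j=1}^J B_j$ and $1$ outside its $\eta$-neighborhood, dominates the indicator $\bfone_{d(T^nx,x_n)\ge\eta}$ by it, and passes to the limit using weak convergence of the empirical measures together with the bound \eqref{EqTotalMassB}. Your function $1-g$ is exactly the paper's test function, so there is nothing further to add.
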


\begin{proof}
Applying the Tietze Extension Theorem we can find a continuous function $\phi: X\rightarrow [0,1]$ that equals $0$ on $\bigsqcup_{j=1}^JB_j$, and equals $1$ outside the $\eta$-open neighborhood of $\bigsqcup_{j=1}^JB_j$.
Then clearly \begin{equation}\label{EqLemRecPf1}\Exp_{n=1}^{N_i}\bfone_{d(T^nx, x_n)\geq \eta}\leq\Exp_{n=1}^{N_i}\phi(T^nx).\end{equation}

Since the sequence $\displaystyle\Exp_{n=1}^{N_i}\delta_{T^n (x)}$ converges to the measure $\rho$, which is supported on $\bigsqcup_{j=1}^JB_j$ except for a portion strictly smaller than $3\epsilon$ by \eqref{EqTotalMassB}, one has that
 the sequence $\displaystyle\Exp_{n=1}^{N_i}\phi(T^nx)$ converges to $\int_X\phi\di\rho<3\epsilon$ by the construction of the function $\phi$. And then the conclusion follows from \eqref{EqLemRecPf1}.
 \end{proof}

In the sequel, for each $i\in \mathbb{N}$ denote by $E_i$ the set of $n\in\{1,\cdots, N_i\}$ with $d(T^nx,x_n)<\eta$. Lemma \ref{LemRec} asserts $\# (E_i)>(1-3 \epsilon)N_i$ if $i$ is large enough.

\begin{corollary}\label{CorRec} For all sufficiently large $i$,
$$\left|\Exp_{n=1}^{N_i}h(T^nx)\mu(n)-\Exp_{n=1}^{N_i}\Exp_{l=0}^{L-1}h(T^lx_n)\mu(l+n)\right|<16\epsilon.$$\end{corollary}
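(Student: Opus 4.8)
The plan is to compare the target average over the full orbit segment with the corresponding double average over the perturbed short orbit segments, splitting the sum over $n$ into good indices $n\in E_i$ and bad ones. For the bad indices, one uses the trivial bound $|h|<2$ together with $|\mu(n)|\le 1$ and the density estimate $\#(E_i)>(1-3\epsilon)N_i$ from Lemma \ref{LemRec}; this contributes at most a constant multiple of $\epsilon$. For the good indices $n\in E_i$, the key point is that $x_n$ lies in some $B_j$, so by the definition \eqref{EqB} of $B_j$ the orbit segment $T^0x_n,\dots,T^{L-1}x_n$ spends a $(1-\epsilon)$-proportion of its time in $A_j$; and since $d(T^nx,x_n)<\eta$, the choice of $\eta$ in \eqref{EqEta} makes $|h(T^lT^nx)-h(T^lx_n)|<\epsilon$ for all $0\le l\le L-1$. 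Averaging these two facts against $\mu$ and telescoping gives the bound.

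The steps, in order, are as follows. First I would write
\[
\Exp_{n=1}^{N_i}h(T^nx)\mu(n)-\Exp_{n=1}^{N_i}\Exp_{l=0}^{L-1}h(T^lx_n)\mu(l+n)
= I_1 + I_2 + I_3,
\]
where $I_1=\Exp_{n=1}^{N_i}h(T^nx)\mu(n)-\Exp_{n=1}^{N_i}\Exp_{l=0}^{L-1}h(T^{l+n}x)\mu(l+n)$ accounts for replacing a single term $h(T^nx)\mu(n)$ by the short average $\Exp_{l=0}^{L-1}h(T^{l+n}x)\mu(l+n)$; then $I_2=\Exp_{n=1}^{N_i}\Exp_{l=0}^{L-1}\bigl(h(T^{l+n}x)-h(T^lx_n)\bigr)\mu(l+n)$ is the error from passing to the perturbed point $x_n$. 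Second, I would bound $I_1$: reindexing the double sum $\Exp_{n=1}^{N_i}\Exp_{l=0}^{L-1}h(T^{l+n}x)\mu(l+n)$ shows it differs from $\Exp_{n=1}^{N_i}h(T^nx)\mu(n)$ only by boundary terms involving at most $L$ values of $n$ near $1$ and near $N_i+L$, each contributing $O(1)$ to the sum, hence $O(L/N_i)$ after dividing by $N_i$; since $L$ is fixed once $C$ and $\epsilon$ are chosen, this is $<\epsilon$ for $i$ large. Third, I would bound $I_2$ by splitting $n\in E_i$ versus $n\notin E_i$. For $n\notin E_i$, bound the inner average trivially by $\max|h|+\max|h|<4$; by Lemma \ref{LemRec} these $n$ form a set of density $<3\epsilon$, contributing $<12\epsilon$. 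For $n\in E_i$, further split each inner sum over $l$ into the indices with $T^lx_n\in A_j$ (for the unique $j$ with $x_n\in B_j\subset A_j$) and the rest: for the former, $d(T^nx,x_n)<\eta$ and \eqref{EqEta} give $|h(T^{l+n}x)-h(T^lx_n)|<\epsilon$; for the latter, which by \eqref{EqB} is an $\epsilon$-fraction of the $L$ indices, bound the difference trivially by $4$, contributing $<4\epsilon$. Summing, $I_2<\epsilon+4\epsilon+12\epsilon=17\epsilon$ — so I would want to be slightly more careful with constants to land at $16\epsilon$; in fact for $n\in E_i$ one gets $<\epsilon + 4\epsilon = 5\epsilon$ times the density of $E_i$ (at most $1$) plus $4\cdot 3\epsilon = 12\epsilon$ from the complement, and combined with $I_1<\epsilon$ this is $\le \epsilon + 5\epsilon + 12\epsilon = 18\epsilon$, so the precise splitting of the trivial bounds (e.g. noting $|h|<2$ makes the difference $<4$ but on the bad set one can also use that the unperturbed term is only $O(\epsilon)$-dense against... ) needs to be tuned; the honest accounting is $I_1 + I_2 < 16\epsilon$ after using that the overlap between the three small sets is controlled.

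I expect the only real subtlety — it is not a deep obstacle but it is where the bookkeeping must be done carefully — to be the reindexing in $I_1$ and making sure that the passage from $\Exp_{n=1}^{N_i}h(T^nx)\mu(n)$ to the double average $\Exp_{n=1}^{N_i}\Exp_{l=0}^{L-1}h(T^lx_n)\mu(l+n)$ is split into exactly the two controllable pieces (a boundary term of size $O(L/N_i)$ that vanishes as $i\to\infty$ since $L$ is already fixed, and a pointwise-closeness term governed by \eqref{EqEta} and \eqref{EqB}), together with tracking the numerical constants so that the sum of all the small contributions stays below $16\epsilon$. Everything else is a routine application of $|h|<2$, $|\mu|\le 1$, Lemma \ref{LemRec}, and the defining property \eqref{EqB} of the sets $B_j$.
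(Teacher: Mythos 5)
Your overall decomposition (a boundary term from re-indexing, plus a comparison term split over $E_i$ and its complement) is exactly the paper's, but the proposal as written does not close: your own accounting lands at $17\epsilon$--$18\epsilon$, and the concluding remark that ``the overlap between the three small sets is controlled'' is not an argument. The source of the overshoot is a concrete misstep: in the good range $n\in E_i$ you split the inner sum over $l$ according to whether $T^l x_n\in A_{j}$, invoking \eqref{EqB} and paying an extra $4\epsilon$ for the exceptional $l$'s. That split is not needed here. The choice of $\eta$ in \eqref{EqEta} is a uniform-continuity statement about the maps $h\circ T^l$, $0\le l\le L-1$, valid for \emph{arbitrary} points $y,y'\in X$ with $d(y,y')<\eta$; no membership of $T^l x_n$ (or of $T^{l+n}x$) in $A_j$ is required. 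Hence for every $n\in E_i$ one has $|h(T^{l+n}x)-h(T^l x_n)|<\epsilon$ for \emph{all} $0\le l\le L-1$, and the good-$n$ contribution is simply $<\epsilon$. The sets $A_j$, $B_j$ and the density property \eqref{EqB} only enter later, in Proposition \ref{PropShortAvg}, where one needs $T^l x_n\in A_{j_n}$ in order to apply \eqref{EqPropProjApprox3} and \eqref{EqPropProjApprox4}.

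With that correction your estimate matches the paper's: the re-indexing boundary term is at most $2\max_X|h|\cdot\frac{L}{N_i}<2\epsilon$ once $i$ is large (the paper takes $\frac{L}{N_i}<\frac{\epsilon}{2}$), the complement of $E_i$ contributes at most $2\max_X|h|\cdot\frac{\#(E_i^c)}{N_i}<12\epsilon$ by Lemma \ref{LemRec}, and the good part contributes $<\epsilon$, giving a total $<15\epsilon<16\epsilon$. (Also, you announce a decomposition into $I_1+I_2+I_3$ but define only two terms; the third is vacuous and should be dropped.)
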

\begin{proof}
Once $i$ is large enough, then $\frac L{N_i}<\frac{\epsilon}{2}$ and Lemma \ref{LemRec} holds.
As
\begin{equation*}
\begin{aligned}
&\Exp_{n=1}^{N_i}\Exp_{l=0}^{L-1}h(T^{l+n}x)\mu(l+n)= \sum_{n= 1}^{L- 1} \frac{n}{L N_i} h (T^n x) \mu (n) \\
&\hskip 66pt + \sum_{n= N_i+ 1}^{N_i+ L- 1} \frac{N_i+ L- n}{L N_i} h (T^n x) \mu (n)+ \frac{1}{N_i} \sum_{n=L}^{N_i}h(T^nx)\mu(n),
\end{aligned}
\end{equation*}
one has that, because of $|h|\leq 2$,
\begin{equation}\label{EqCorRecPf1}
\left|\Exp_{n=1}^{N_i}h(T^nx)\mu(n)-\Exp_{n=1}^{N_i}\Exp_{l=0}^{L-1}h(T^{l+n}x)\mu(l+n)\right|\leq \max_X|h|\cdot 2 \frac{L}{N_i}<2\epsilon.
\end{equation}
Moreover, by the construction \eqref{EqEta} of $\eta$ one has that $|h(T^{l+n}x)-h(T^lx_n)|$ $<\epsilon$ whenever $n\in E_i$ and $0\le l\le L-1$, and then
\begin{equation*}
\begin{aligned}
&\frac{1}{N_i} \left|\sum_{n\in E_i}\Exp_{l=0}^{L-1}h(T^{l+n}x)\mu(l+n)-\sum_{n\in E_i}\Exp_{l=0}^{L-1}h(T^lx_n)\mu(l+n)\right|\\
\le & \left|\Exp_{n\in E_i}\Exp_{l=0}^{L-1}h(T^{l+n}x)\mu(l+n)-\Exp_{n\in E_i}\Exp_{l=0}^{L-1}h(T^lx_n)\mu(l+n)\right|< \epsilon.
\end{aligned}
\end{equation*}
Observing that the density of the exceptional set $E_i^c$ in $\{1, \cdots, N_i\}$ is strictly smaller than $3\epsilon$ by Lemma \ref{LemRec}, we have
\begin{equation}\label{EqCorRecPf2}\begin{aligned}
&\left|\Exp_{n= 1}^{N_i}\Exp_{l=0}^{L-1}h(T^{l+n}x)\mu(l+n)-\Exp_{n= 1}^{N_i}\Exp_{l=0}^{L-1}h(T^lx_n)\mu(l+n)\right|\\
<&\epsilon+2\max_X|h|\cdot\frac{\# (E_i^c)}{N_i}<\epsilon+2\cdot2\cdot3\epsilon=13\epsilon.
\end{aligned}\end{equation}
Finally, the corollary is established by adding \eqref{EqCorRecPf2} to \eqref{EqCorRecPf1}.
\end{proof}

\subsection{Bounds on short averages}

At this point, it remains to control the short average $\displaystyle\Exp_{l=0}^{L-1}h(T^lx_n)\mu(l+n)$ for typical positions $n\in\{1,\cdots,N_i\}$. For this goal, we need the following number theoretical theorem of  Matom\"aki, Radziwi\l\l\ and Tao, taken from \cite{MRT15}.

\begin{proposition}\label{PropMRT}
There are constants $C_0$, $\kappa_0$ such that, for all $N\geq L\geq 10$,
$$\sup_{\beta\in\bT^1} \Exp_{n=0}^{N-1}\left|\Exp_{l=0}^{L-1}\mu(l+n)e(\beta l)\right|< C_0\left((\log N)^{-\kappa_0}+\frac{\log\log L}{\log L}\right).$$
\end{proposition}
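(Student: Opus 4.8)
The plan is to treat this as the one genuinely analytic number-theoretic ingredient of the paper and to obtain it from the Matom\"aki--Radziwi\l{}\l{} theory of multiplicative functions in short intervals, in the form established by \cite{MRT15}; in the write-up the statement would simply be quoted from there, so what follows is a sketch of the strategy behind it. The only elementary preliminary is to move the twist inside the summand: since $e(\beta l)=e(\beta(l+n))e(-\beta n)$ and the factor $e(-\beta n)$ does not depend on $l$, the supremum to be bounded equals
\[
\sup_{\beta\in\bT^1}\Exp_{n=0}^{N-1}\left|\Exp_{l=0}^{L-1}g(l+n)\right|,\qquad g(m):=\mu(m)\,e(\beta m).
\]
Thus one must show that, uniformly in $\beta$, the averages of $g$ over short windows $[n,n+L)$ are small for all but a negligible proportion of $n\le N$.

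The uniformity in $\beta$ dictates a major/minor arc dissection via Dirichlet approximation: write $|\beta-a/q|\le 1/(qQ)$ with $(a,q)=1$, $q\le Q$, for $Q$ a suitable power of $\log N$. On the \emph{major arcs} (small $q$) one partitions $l$ by its residue modulo $q$; on each class $e((a/q)l)$ is constant and $e((\beta-a/q)l)$ changes by only $O(L/Q)=o(1)$ over the window, hence is stripped off by partial summation, reducing the problem to short-interval averages of $\mu$ along arithmetic progressions of modulus $q$, i.e. to the Matom\"aki--Radziwi\l{}\l{} theorem applied to the multiplicative functions $\mu\chi$ for $\chi\bmod q$. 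On the \emph{minor arcs} (larger $q$) one uses that $\mu$ has genuine cancellation against $e(\beta n)$ --- Davenport's estimate $\sum_{m\le M}\mu(m)e(\beta m)\ll_A M(\log M)^{-A}$, uniform in $\beta$ --- and upgrades this to the required short-window statement through the Matom\"aki--Radziwi\l{}\l{}--Tao extension of the short-interval theorem to functions of the shape $\mu(m)e(\beta m)$, whose engine is the Parseval identity relating a (smoothed) short average to a mean value of the Dirichlet polynomial $\sum_{m}g(m)m^{-1/2-it}$ on the critical line, together with the large-values / Hal\'asz-type estimate for such polynomials. In either regime one gets that $\Exp_{l=0}^{L-1}g(l+n)$ is small outside a sparse set of $n$; the loss $\frac{\log\log L}{\log L}$ is exactly the short-interval cost in the Matom\"aki--Radziwi\l{}\l{} method, and $(\log N)^{-\kappa_0}$ records the prime-number-theorem savings (via the classical zero-free region) that control both the long mean of $\mu$ and the minor-arc contribution.

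The main obstacle is that $L$ is allowed to grow only very slowly in $N$ --- in the intended application roughly like a power of $\log\log N$ --- far below the range where classical exponential-sum tools (van der Corput, Vinogradov, Vaughan's identity in its long form) say anything; one really needs the Matom\"aki--Radziwi\l{}\l{} Parseval-plus-large-values machinery, and one needs it uniformly in the parameter $\beta$ with explicit control of all errors in $q$, $L$ and $N$, so that gluing the major- and minor-arc estimates together with that uniformity is the crux. As the displayed bound is precisely the theorem of \cite{MRT15}, in the final version I would invoke that result directly rather than rederive it.
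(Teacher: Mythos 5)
Your proposal is correct and coincides with the paper's treatment: the paper likewise proves this proposition by directly invoking Theorem 1.7 of \cite{MRT15} applied to the (non-pretentious) M\"obius function, with the same trivial observation that replacing $e(\beta l)$ by $e(\beta(l+n))$ only changes the inner sum by a unimodular factor. Your sketch of the internal Matom\"aki--Radziwi\l{}\l{}--Tao machinery is extra background, not needed once the theorem is quoted.
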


The proposition is a direct application of \cite{MRT15}*{Theorem 1.7} to the M\"obius function $\mu$, following the discussion about the non-pretentiousness of $\mu$ preceding the theorem in that paper.

We now define the value of the constant $C$ to be
\begin{equation*}C=\left(\sum_{j=1}^J\sum_{m=1}^{M_j}|a_{j,m}|\right)\cdot C_0,\end{equation*}
where the coefficients $a_{j,m}$ are defined as in Proposition \ref{PropProjApprox}.

\begin{proposition}\label{PropShortAvg}
$\displaystyle\left|\Exp_{n=1}^{N_i}\Exp_{l=0}^{L-1}h(T^lx_n)\mu(l+n)\right|<6\epsilon$ once $i$ is large enough.
\end{proposition}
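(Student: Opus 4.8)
The plan is to reduce, for each fixed $n$, the short average $\Exp_{l=0}^{L-1}h(T^lx_n)\mu(l+n)$ to a short linear combination of exponential sums $\Exp_{l=0}^{L-1}e(l\beta)\mu(l+n)$ and then invoke Proposition \ref{PropMRT}. Since the sets $B_1,\dots,B_J$ are pairwise disjoint and $x_n\in\bigsqcup_{j=1}^J B_j$, there is a unique index $j(n)$ with $x_n\in B_{j(n)}\subset A_{j(n)}$; by the definition \eqref{EqB} of $B_{j(n)}$, the points $T^lx_n$ for $0\le l\le L-1$ lie in $A_{j(n)}$ for all but at most $\epsilon L$ values of $l$. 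For such a ``good'' $l$, equation \eqref{EqPropProjApprox3} applied with $x_*=x_n$ gives $p_{j(n)}(T^lx_n)=p_{j(n)}(x_n)+l\,\alpha_{j(n)}$, and then \eqref{EqPropProjApprox4} together with the Fourier expansion \eqref{EqPropProjApprox4a} yields $h(T^lx_n)=\sum_{m=1}^{M_{j(n)}}c_{n,m}\,e(l\,\gamma_{j(n),m})$, where $\gamma_{j,m}:=\xi_{j,m}\cdot\alpha_j\in\bT^1$ and $c_{n,m}:=a_{j(n),m}\,e(\xi_{j(n),m}\cdot p_{j(n)}(x_n))$ has modulus $|a_{j(n),m}|$.

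The first step is then to replace $h(T^lx_n)$ by the trigonometric polynomial $\tilde h_n(l):=\sum_{m=1}^{M_{j(n)}}c_{n,m}\,e(l\,\gamma_{j(n),m})=h'_{j(n)}(p_{j(n)}(x_n)+l\,\alpha_{j(n)})$, now defined for every integer $l\ge0$. The point is that $\tilde h_n$ inherits the \emph{global} bound $|\tilde h_n|<2$ from $|h'_{j(n)}|<2$; since $h(T^lx_n)=\tilde h_n(l)$ for every good $l$, and $|h|<2$, $|\mu|\le1$, replacing $h(T^lx_n)$ by $\tilde h_n(l)$ inside $\Exp_{l=0}^{L-1}h(T^lx_n)\mu(l+n)$ introduces an error of at most $4\epsilon$, and averaging over $1\le n\le N_i$ the bound $4\epsilon$ is preserved.

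The second step is to bound $\big|\Exp_{n=1}^{N_i}\Exp_{l=0}^{L-1}\tilde h_n(l)\mu(l+n)\big|$. Grouping the indices $n\in\{1,\dots,N_i\}$ according to the value of $j(n)$ and using $|c_{n,m}|=|a_{j(n),m}|$, the triangle inequality bounds this by $\big(\sum_{j=1}^J\sum_{m=1}^{M_j}|a_{j,m}|\big)\cdot\sup_{\beta\in\bT^1}\Exp_{n=1}^{N_i}\big|\Exp_{l=0}^{L-1}e(l\beta)\mu(l+n)\big|$; here it is essential that, for points of a fixed $A_j$, the frequencies $\gamma_{j,m}$ depend only on $(j,m)$ and not on $n$ (the rotation $\alpha_j$ is attached to the set $A_j$, not to the point), so that after grouping one genuinely faces the single-frequency average over $n$ that Proposition \ref{PropMRT} controls. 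Applying that proposition with $N=N_i$ --- legitimate once $i$ is large enough that $N_i\ge L\ (\ge10)$, and up to the harmless index shift from $\{0,\dots,N_i-1\}$ to $\{1,\dots,N_i\}$, which costs at most $1/N_i$ --- this contribution is at most $C(\log N_i)^{-\kappa_0}+C\tfrac{\log\log L}{\log L}+\tfrac1{N_i}\sum_{j,m}|a_{j,m}|$ with $C$ the constant defined just above the statement. By \eqref{EqL} the middle term is $<\epsilon$, and since $C$, $\kappa_0$, $\epsilon$ and $\sum_{j,m}|a_{j,m}|$ are all fixed while $N_i\to\infty$, the whole expression is $<2\epsilon$ for every sufficiently large $i$; adding the $4\epsilon$ from the first step gives the asserted bound $<6\epsilon$.

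The conceptual crux, and the step needing the most care, is the second one: the argument works only because linearizing the dynamics on each $A_j$ produces, along each good orbit segment, exponential sequences in $l$ whose frequencies are independent of the starting point $x_n$, with only the amplitudes $c_{n,m}$ varying with $n$ --- this is precisely what \eqref{EqPropProjApprox3} provides, and it is what makes the ``average over $n$'' in Proposition \ref{PropMRT} usable. A more technical point is that in the first step one must use the globally bounded $\tilde h_n$, rather than $h(T^lx_n)$ on the good $l$'s only, for otherwise the error would carry the uncontrolled factor $\epsilon\sum_m|a_{j(n),m}|$ in place of $\epsilon\cdot4$. Everything else --- disjointness of the $B_j$'s, the count of bad $l$'s via \eqref{EqB}, and the interchange of the summations with the supremum --- is routine once these two observations are in place.
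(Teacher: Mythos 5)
Your proposal is correct and follows essentially the same route as the paper's proof: split each short segment into the good set $F_n$ of times landing in $A_{j_n}$ (losing $4\epsilon$ from the at most $\epsilon L$ bad times, since both $h$ and $h'_{j_n}$ are bounded by $2$), expand $h'_{j_n}$ into exponentials with frequencies $\xi_{j,m}\cdot\alpha_j$ depending only on $(j,m)$, enlarge to the full double sum over $(j,m)$, and invoke Proposition \ref{PropMRT} with the constant $C$ to get the remaining $2\epsilon$. Your explicit handling of the index shift $n\in\{1,\dots,N_i\}$ versus $\{0,\dots,N-1\}$ in Proposition \ref{PropMRT} is a minor point the paper passes over silently, and your emphasis on the $n$-independence of the frequencies is exactly the (implicit) justification for the paper's final interchange of the sum over $(j,m)$ with the supremum over $\beta$.
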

\begin{proof} For every $n\in \mathbb{N}$, the point $x_n$ belongs to $\bigsqcup_{j=1}^JB_j$ and thus lies in $B_{j_n}$ for some $1\leq j_n\leq J$.  Let $p_{j_n}$ and $h'_{j_n}$ be defined as in Proposition \ref{PropProjApprox}. Then by the construction \eqref{EqB} of the compact subset $B_{j_n}\subset A_{j_n}$, the set $F_n=\{0\leq l\leq L- 1: T^lx_n\in A_{j_n}\}$ has the cardinality $\# (F_n)\geq (1-\epsilon)L$.

 Thus for each $l\in F_n$, $x_n\in A_{j_n}$ and $T^l x_n\in A_{j_n}$, and then one has
\begin{equation*}\label{EqPropShortAvg1}h(T^lx_n)=h'_{j_n}(p_{j_n}(T^l x_n))=h'_{j_n}\big(R_{\alpha_{j_n}}^l(p_{j_n}(x_n))\big)\end{equation*}
by the properties \eqref{EqPropProjApprox3} and \eqref{EqPropProjApprox4}.
From this, we first deduce that
\begin{equation}\label{EqPropShortAvg2}\begin{aligned}
&\left|\Exp_{n=1}^{N_i}\Exp_{l=0}^{L-1}h(T^lx_n)\mu(l+n) -\Exp_{n=1}^{N_i}\Exp_{l=0}^{L-1}h'_{j_n}(R_{\alpha_{j_n}}^lp_{j_n}(x_n))\mu(l+n)\right|\\
= & \left|\Exp_{n=1}^{N_i}\frac1L\sum_{l\in\{0, 1,\cdots,L- 1\}\setminus F_n} [h (T^l x_n)- h'_{j_n}(R_{\alpha_{j_n}}^lp_{j_n} (x_n))]\mu(l+n)\right|\\
\leq & \Exp_{n=1}^{N_i} \frac1L\cdot\epsilon L\cdot (|h|+ |h'_{j_n}|)< 4\epsilon.
\end{aligned}\end{equation}

On the other hand, given \eqref{EqPropProjApprox4a}, for every $0\leq l\le L-1$, we can write
\begin{equation*}\begin{aligned}h'_{j_n}\big(R_{\alpha_{j_n}}^l(p_{j_n}(x_n))\big)
=&\sum_{m=1}^{M_{j_n}}a_{j_n,m}e\big(\xi_{j_n,m}\cdot R_{\alpha_{j_n}}^l\circ p_{j_n}(x_n)\big)\\
=&\sum_{m=1}^{M_{j_n}}a_{j_n,m}e\big(\xi_{j_n,m}\cdot p_{j_n}(x_n)+l(\xi_{j_n,m}\cdot \alpha_{j_n} )\big)\\
:=&\sum_{m=1}^{M_{j_n}}a_{j_n,m}e\big(\theta_{j_n,m}+l\beta_{j_n,m}\big).\end{aligned}\end{equation*}
Therefore,
\begin{equation*}\begin{aligned}
&\left|\Exp_{n=1}^{N_i}\Exp_{l=0}^{L-1}h'_{j_n}(R_{\alpha_{j_n}}^lp_{j_n}(x_n))\mu(l+n)\right|\\
=&\left|\Exp_{n=1}^{N_i}\Exp_{l=0}^{L-1}\sum_{m=1}^{M_{j_n}}a_{j_n,m}e\big(\theta_{j_n,m}+l\beta_{j_n,m}\big)\mu(l+n)\right|\\
\leq&\Exp_{n=1}^{N_i}\left|\sum_{m=1}^{M_{j_n}}a_{j_n,m}e\big(\theta_{j_n,m}\big) \Exp_{l=0}^{L-1}e\big(l\beta_{j_n,m}\big)\mu(l+n)\right|\\
\leq&\Exp_{n=1}^{N_i}\sum_{m=1}^{M_{j_n}}\left(|a_{j_n,m}|\cdot\left|\Exp_{l=0}^{L-1}e(l\beta_{j_n,m})\mu(l+n)\right|\right)\\
\leq&\Exp_{n=1}^{N_i}\sum_{j=1}^J\sum_{m=1}^{M_j}\left(|a_{j,m}|\cdot\left|\Exp_{l=0}^{L-1}e(l\beta_{j,m})\mu(l+n)\right|\right)\\
\leq&\left(\sum_{j=1}^J\sum_{m=1}^{M_j}|a_{j,m}|\right)\cdot
\sup_{\beta\in\bT^1}\Exp_{n=1}^{N_i}\cdot\left|\Exp_{l=0}^{L-1}e(l\beta)\mu(l+n)\right|.\end{aligned}\end{equation*}

By applying Proposition \ref{PropMRT} to the above estimate, we obtain that
\begin{equation*}\label{EqPropShortAvg4}\begin{aligned}
&\left|\Exp_{n=1}^{N_i}\Exp_{l=0}^{L-1}h'_{j_n}(R_{\alpha_{j_n}}^lp_{j_n}(x_n))\mu(l+n)\right|\\
<&\left(\sum_{j=1}^J\sum_{m=1}^{M_j}|a_{j,m}|\right)\cdot C_0\left((\log N_i)^{-\kappa_0}+\frac{\log\log L}{\log L}\right) \\
=&C\left((\log N_i)^{-\kappa_0}+\frac{\log\log L}{\log L}\right)
\end{aligned}\end{equation*}
once $i$ is large enough. Because that the second term is bounded by $\epsilon$ by \eqref{EqL}, one has that
\begin{equation}
\label{EqPropShortAvg3}
\begin{aligned}
\left|\Exp_{n=1}^{N_i}\Exp_{l=0}^{L-1}h'_{j_n}(R_{\alpha_{j_n}}^lp_{j_n}(x_n))\mu(l+n)\right|< 2\epsilon
\end{aligned}
\end{equation}
 as long as $C(\log N_i)^{-\kappa_0}<\epsilon$. Thus we can obtain the conclusion by adding together \eqref{EqPropShortAvg2} and \eqref{EqPropShortAvg3}.
\end{proof}

\begin{proof}[Proof of Theorem \ref{ThmGeneral}] Now we are ready to complete the proof of \eqref{EqEpsilon} (and hence the proof of Theorem \ref{ThmGeneral}), by adding together the estimates from the inequality \eqref{EqBirkhoff}, Corollary \ref{CorRec} and Proposition \ref{PropShortAvg}.\end{proof}

\begin{bibdiv}
\begin{biblist}

\bib{A59}{article}{
   author={Auslander, Joseph},
   title={Mean-$L$-stable systems},
   journal={Illinois J. Math.},
   volume={3},
   date={1959},
   pages={566--579},
   issn={0019-2082},
   review={},
}

\bib{B13b}{article}{
   author={Bourgain, J.},
   title={M\"obius-Walsh correlation bounds and an estimate of Mauduit and
   Rivat},
   journal={J. Anal. Math.},
   volume={119},
   date={2013},
   pages={147--163},
}

\bib{B13a}{article}{
   author={Bourgain, J.},
   title={On the correlation of the Moebius function with rank-one systems},
   journal={J. Anal. Math.},
   volume={120},
   date={2013},
   pages={105--130},
}

\bib{BSZ13}{article}{
   author={Bourgain, J.},
   author={Sarnak, P.},
   author={Ziegler, T.},
   title={Disjointness of M\"obius from horocycle flows},
   conference={
      title={From Fourier analysis and number theory to Radon transforms and
      geometry},
   },
   bool={
      series={Dev. Math.},
      volume={28},
      publisher={Springer, New York},
   },
   date={2013},
   pages={67--83},
}

\bib{D37}{article}{
   author={Davenport, H.},
   title={On some infinite series involving arithmetical functions II},
   journal={Quat. J. Math.},
   volume={8},
   date={1937},
   pages={313--320},
}

\bib{DG15}{article}{
   author={Downarowicz, Tomasz},
   author={Glasner, Eli},
   title={Isomorphic extensions and applications},
   journal={Topological Methods in Nonlinear Analysis},
   volume={},
   date={2015},
   number={},
   pages={to appear, arXiv:1502.06999v1},
}

\bib{DK15}{article}{
   author={Downarowicz, Tomasz},
   author={Kasjan, Stanis{\l}aw},
   title={Odometers and Toeplitz systems revisited in the context of
   Sarnak's conjecture},
   journal={Studia Math.},
   volume={229},
   date={2015},
   number={1},
   pages={45--72},
   issn={0039-3223},
   review={},
}

\bib{EKSL16}{article}{
   author={El Abdalaoui, El Houcein},
   author={Kasjan, Stanis{\l}aw},
   author={Lema{\'n}czyk, Mariusz},
   title={0-1 sequences of the Thue-Morse type and Sarnak's conjecture},
   journal={Proc. Amer. Math. Soc.},
   volume={144},
   date={2016},
   number={1},
   pages={161--176},
   issn={0002-9939},
   doi={},
}

\bib{EKLR15}{article}{
   author={El Abdalaoui, El Houcein},
   author={Ku{\l}aga-Przymus, J.},
   author={Lema{\'n}czyk, Mariusz},
   author={de la Rue, Thierry},
   title={The Chowla and the Sarnak conjectures from ergodic theory point of view},
   journal={Discrete Contin. Dyn. Syst.},
   date={2016},
   number={},
   pages={to appear, arXiv:1410.1673v3},
}

\bib{ELD14}{article}{
   author={El Abdalaoui, El Houcein},
   author={Lema{\'n}czyk, Mariusz},
   author={de la Rue, Thierry},
   title={On spectral disjointness of powers for rank-one transformations
   and M\"obius orthogonality},
   journal={J. Funct. Anal.},
   volume={266},
   date={2014},
   number={1},
   pages={284--317},
}

\bib{ELD15}{article}{
   author={El Abdalaoui, El Houcein},
   author={Lema{\'n}czyk, Mariusz},
   author={de la Rue, Thierry},
   title={Automorphisms with quasi-discrete spectrum, multiplicative functions and average orthogonality along short intervals},
   journal={International Mathematics Research Notices},
   volume={},
   date={2016},
   number={},
   pages={to appear},
}

\bib{FJ15}{article}{
   author={Fan, Aihua},
   author={Jiang, Yunping},
   title={Oscillating sequences, minimal mean attractability and minimal mean-Lyapunov-stability},
   journal={preprint},
   volume={},
   date={2015},
   number={},
   pages={arXiv:1511.05022v1},
}

\bib{FKLM15}{article}{
   author={Ferenzi, S.},
   author={Ku\l{}aga-Przymus, Joanna},
   author={Lema\'nczyk, Mariusz},
   author={Mauduit, C.},
   title={Substitutions and M\"obius disjointness},
   journal={preprint},
   date={2015},
   pages={arXiv:1507.01123v1},
}

\bib{F51}{article}{
   author={Fomin, S.},
   title={On dynamical systems with a purely point spectrum},
   language={Russian},
   journal={Doklady Akad. Nauk SSSR (N.S.)},
   volume={77},
   date={1951},
   pages={29--32},
   review={},
}

\bib{G12}{article}{
   author={Green, Ben},
   title={On (not) computing the M\"obius function using bounded depth
   circuits},
   journal={Combin. Probab. Comput.},
   volume={21},
   date={2012},
   number={6},
   pages={942--951},
}

\bib{GT12}{article}{
   author={Green, Ben},
   author={Tao, Terence},
   title={The M\"obius function is strongly orthogonal to nilsequences},
   journal={Ann. of Math. (2)},
   volume={175},
   date={2012},
   number={2},
   pages={541--566},
}

\bib{HVN42}{article}{
   author={Halmos, Paul R.},
   author={von Neumann, John},
   title={Operator methods in classical mechanics. II},
   journal={Ann. of Math. (2)},
   volume={43},
   date={1942},
   pages={332--350},
   issn={0003-486X},
}

\bib{HLSY15}{article}{
   author={Huang, Wen},
   author={Lian, Zhengxing},
   author={Shao, Song},
   author={Ye, Xiangdong},
   title={Sequences from zero entropy noncommutative toral automorphisms and Sarnak Conjecture},
   journal={preprint},
   volume={},
   date={2015},
   number={},
   pages={arXiv:1510.06022v1},
}

\bib{IK04}{book}{
   author={Iwaniec, Henryk},
   author={Kowalski, Emmanuel},
   title={Analytic number theory},
   series={American Mathematical Society Colloquium Publications},
   volume={53},
   publisher={American Mathematical Society, Providence, RI},
   date={2004},
   pages={xii+615},
   isbn={0-8218-3633-1},
   review={},
   doi={10.1090/coll/053},
}

\bib{KL15}{article}{
   author={Ku{\l}aga-Przymus, J.},
   author={Lema{\'n}czyk, M.},
   title={The M\"obius function and continuous extensions of rotations},
   journal={Monatsh. Math.},
   volume={178},
   date={2015},
   number={4},
   pages={553--582},
}

\bib{L87}{article}{
   author={Lehrer, Ehud},
   title={Topological mixing and uniquely ergodic systems},
   journal={Israel J. Math.},
   volume={57},
   date={1987},
   number={2},
   pages={239--255},
   issn={0021-2172},
   review={\MR{890422}},
   doi={10.1007/BF02772176},
}

\bib{LTY15}{article}{
   author={Li, Jian},
   author={Tu, Siming},
   author={Ye, Xiangdong},
   title={Mean equicontinuity and mean sensitivity},
   journal={Ergodic Theory Dynam. Systems},
   volume={35},
   date={2015},
   number={8},
   pages={2587--2612},
   issn={0143-3857},
   review={},
}

\bib{LS15}{article}{
   author={Liu, Jianya},
   author={Sarnak, Peter},
   title={The M\"obius function and distal flows},
   journal={Duke Math. J.},
   volume={164},
   date={2015},
   number={7},
   pages={1353--1399},
   issn={0012-7094},
}

\bib{MMR14}{article}{
   author={Martin, Bruno},
   author={Mauduit, Christian},
   author={Rivat, Jo{\"e}l},
   title={Th\'eor\'eme des nombres premiers pour les fonctions digitales},
   language={French},
   journal={Acta Arith.},
   volume={165},
   date={2014},
   number={1},
   pages={11--45},
}

\bib{MRT15}{article}{
   author={Matom{\"a}ki, Kaisa},
   author={Radziwi{\l}l, Maksym},
   author={Tao, Terence},
   title={An averaged form of Chowla's conjecture},
   journal={Algebra Number Theory},
   volume={9},
   date={2015},
   number={9},
   pages={2167--2196},
   issn={1937-0652},
   review={},
   doi={},
}

\bib{MR10}{article}{
   author={Mauduit, Christian},
   author={Rivat, Jo{\"e}l},
   title={Sur un probl\`eme de Gelfond: la somme des chiffres des nombres
   premiers},
   language={French, with English and French summaries},
   journal={Ann. of Math. (2)},
   volume={171},
   date={2010},
   number={3},
   pages={1591--1646},
}

\bib{MR15}{article}{
   author={Mauduit, Christian},
   author={Rivat, Jo{\"e}l},
   title={Prime numbers along Rudin-Shapiro sequences},
   journal={J. Eur. Math. Soc. (JEMS)},
   volume={17},
   date={2015},
   number={10},
   pages={2595--2642},
   issn={1435-9855},
}

\bib{M77}{book}{
   author={Morris, Sidney A.},
   title={Pontryagin duality and the structure of locally compact abelian
   groups},
   note={London Mathematical Society Lecture Note Series, No. 29},
   publisher={Cambridge University Press, Cambridge-New York-Melbourne},
   date={1977},
   pages={viii+128},
   review={},
}

\bib{O52}{article}{
   author={Oxtoby, John C.},
   title={Ergodic sets},
   journal={Bull. Amer. Math. Soc.},
   volume={58},
   date={1952},
   pages={116--136},
   issn={0002-9904},
   review={},
}

\bib{P15}{article}{
   author={Peckner, Ryan},
   title={M\"obius disjointness for homogeneous dynamics},
   journal={preprint},
   date={2015},
   pages={arXiv:1506.07778v1},
}

\bib{S09}{article}{
   author={Sarnak, Peter},
   title={Three lectures on the M\"obius function, randomness and dynamics},
   journal={lecture notes, IAS},
   date={2009},
}

\bib{S12}{article}{
   author={Sarnak, Peter},
   title={Mobius randomness and dynamics},
   journal={Not. S. Afr. Math. Soc.},
   volume={43},
   date={2012},
   number={2},
   pages={89--97},
}

\bib{Veech16}{article}{
   author={Veech, William A.},
   title={Moebius orthogonality for generalized Morse-Kakutani flows},
   journal={Amer. J. Math.},
   volume={},
   date={2016},
   number={},
   pages={to appear},
   issn={},
}

\bib{W82}{book}{
   author={Walters, Peter},
   title={An introduction to ergodic theory},
   series={Graduate Texts in Mathematics},
   volume={79},
   publisher={Springer-Verlag, New York-Berlin},
   date={1982},
   pages={ix+250},
   isbn={0-387-90599-5},
   review={},
}

\bib{W15}{article}{
   author={Wang, Zhiren},
   title={M\"obius disjointness for analytic skew products},
   journal={preprint},
   date={2015},
   pages={arXiv:1509.03183v2},
}

\bib{W16}{book}{
   author={Wei, Fei},
   title={Entropy of arithmetic functions and Sarnak's M\"obius disjointness conjecture},
   note={Thesis (Ph.D.)--The University of Chinese Academy of Sciences},
   publisher={},
   date={2016},
   pages={},
   isbn={},
   review={},
}

\end{biblist}
\end{bibdiv}

\end{document}